\newtheorem{theorem}{Theorem}[section]
\newtheorem{lemma}[theorem]{Lemma}
\newtheorem{proposition}[theorem]{Proposition}
\theoremstyle{definition}
\newtheorem{conjecture}{Conjecture}[section]
\theoremstyle{remark}
\newtheorem{remark}[theorem]{Remark}
\numberwithin{equation}{section}
\newcommand{\Irr}{\textsf{Irr}}
\newcommand{\Aut}{\textsf{Aut}}
\newcommand{\Out}{\mathsf{Out}}
\newcommand{\M}{\textsf{M}}
\newcommand{\ATLAS}{\textsf{ATLAS} }
\newcommand{\cd}{\mathsf{cd}}
\renewcommand{\leq}{\leqslant}
\newcommand{\Zbb}{\mathbb{Z}}
\providecommand{\mod}[1]{ \ (\mathrm{mod}\ #1)}
\newcommand{\midrule}{\hline}
\newcommand{\bottomrule}{\hline}
\begin{document}

\title[Groups with character degrees of almost simple sporadic groups]{ON GROUPS WITH THE SAME CHARACTER DEGREES AS ALMOST SIMPLE GROUPS WITH SOCLE SPORADIC SIMPLE GROUPS}

\author[S.H. Alavi]{Seyed Hassan Alavi}
 \address{
 S.H. Alavi, Department of Mathematics, Faculty of Science,
 Bu-Ali Sina University, Hamedan, Iran}
  \email{alavi.s.hassan@gmail.com (Gmail is preferred)}
 \email{alavi.s.hassan@basu.ac.ir}

\author[A. Daneshkhah]{Ashraf Daneshkhah$^{\ast}$}
 \thanks{$\ast$ Corresponding author: Ashraf Daneshkhah}
 \address{%
 A. Daneshkhah, Department of Mathematics,
 Faculty of Science,
 Bu-Ali Sina University, Hamedan, Iran}
 \email{daneshkhah.ashraf@gmail.com (Gmail is preferred)}
 \email{adanesh@basu.ac.ir}

\author[A. Jafari]{Ali Jafari}
 \address{%
 A. Jafari, Department of Mathematics,
 Faculty of Science,
 Bu-Ali Sina University, Hamedan, Iran}
 \email{a.jaefary@gmail.com}


\subjclass[2010]{Primary 20C15; Secondary 20D05}
\keywords{Character degrees; Almost simple groups; Sporadic simple groups; Huppert's Conjecture.}

\begin{abstract}
   Let $G$ be a finite group and $\cd(G)$ denote the set of complex irreducible character degrees of $G$. In this paper, we prove that if $G$ is a finite group and $H$ is an almost simple group whose socle is a sporadic simple group $H_{0}$ such that $\cd(G) =\cd(H)$, then $G'\cong H_{0}$ and there exists an abelian subgroup $A$ of $G$ such that $G/A$ is isomorphic to $H$. In view of Huppert's conjecture (2000), we also provide some examples to show that $G$ is not necessarily a direct product of $A$ and $H$, and hence we cannot extend this conjecture to almost simple groups.
\end{abstract}

\date{\today}
\maketitle

\section{Introduction}\label{sec:Intro}

Let $G$ be a finite group, and let $\Irr(G)$ be the set of complex irreducible character degrees of $G$. Denote the set of character degrees of $G$ by $\cd(G)=\{\chi(1)\vert \chi \in \Irr(G)\}$, and when the context allows us the set of irreducible character degrees will be referred to as the set of character degrees. There is growing interest in the information regarding the structure of $G$ which can be determined from the character degree set of $G$. It is well-known that the character degree set of $G$ can not use to completely determine the structure of $G$. For example, the non-isomorphic groups $D_8$ and $Q_8$ not only have the same set of character degrees, but also share the same character table.

The character degree set cannot be used to distinguish between solvable and nilpotent groups. For example,
if $G$ is either $Q_{8}$ or $S_{3}$, then $\cd(G) = \{1, 2\}$. However, in the late 1990s, Huppert \cite{Hupp-I}
posed a conjecture which, if true, would sharpen the connection between the character degree set of a non-abelian simple group and the structure of the group.

\begin{conjecture}[Huppert] Let $G$ be a finite group, and let $H$ be a finite non-abelian simple group such that the sets of character degrees of $G$ and $H$ are the same. Then $G \cong H \times A$, where $A$ is an abelian group.
\end{conjecture}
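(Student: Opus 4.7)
The plan is to follow the five-step strategy introduced by Huppert in \cite{Hupp-I}, which has been verified for many families of non-abelian simple groups and, in particular, for all sporadic simple groups -- the class of interest in this paper. Assume $\cd(G)=\cd(H)$ throughout, where $H$ is a fixed non-abelian simple group.

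The first two steps aim to show $G'=G''$ and to exhibit a normal subgroup $M\trianglelefteq G'$ with $G'/M\cong H$. For $G'=G''$, the principal tool is the It\^o--Michler theorem: any prime $p$ that divides no degree in $\cd(H)$ forces the Sylow $p$-subgroups of $G/F(G)$ to be abelian and normal, and combining such constraints with standard results on character degrees of solvable groups (Gallagher's theorem, Isaacs' work on $M$-groups) rules out any non-trivial abelian quotient of $G'$. For the identification of the chief factor, one invokes the Classification of Finite Simple Groups together with a careful arithmetic comparison of the largest degrees in $\cd(H)$ and of their prime divisors, in order to force $G'/M\cong H$ rather than, say, a direct power $H^{k}$ or a different non-abelian simple group whose degrees partially overlap those of $H$.

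The third step, typically the crux of the argument, is to prove $M=1$. Viewing $M$ as an $H$-module and combining Clifford theory with specific information about the smallest non-trivial degrees of $H$ (and repeated use of Gallagher's theorem), one aims to produce a character degree of $G$ lying outside $\cd(H)$ whenever $M\neq 1$. This is the main obstacle of the programme: the analysis is entirely case-dependent on the particular simple group $H$, each case requiring its own module-theoretic argument, and at present no uniform proof of this step is available -- it is precisely here that the sporadic-by-sporadic verifications in the literature perform the heaviest work.

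Once $G'\cong H$ is established, the final two steps conclude that $G=G'\times C_{G}(G')$ with $C_{G}(G')$ abelian. The first holds because $G/C_{G}(G')$ embeds into $\Aut(H)$, and $G=G''$ together with the solvability of $\Out(H)$ (a consequence of the Schreier conjecture) forces $G/C_{G}(G')$ to coincide with the inner automorphism group, which is isomorphic to $G'\cong H$; the second then follows since any non-abelian quotient of $C_{G}(G')$ would contribute a character degree of $G$ not appearing in $\cd(H)$, contradicting the hypothesis.
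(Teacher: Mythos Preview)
The statement you are attempting to prove is not proved in the paper at all: it is stated there explicitly as a \emph{conjecture} (Huppert's Conjecture), and the paper's main theorem is a different, weaker assertion concerning almost simple groups with sporadic socle. There is therefore no ``paper's own proof'' to compare your proposal against. What you have written is not a proof but a summary of Huppert's five-step \emph{strategy}, and you yourself acknowledge the essential gap when you say that Step~3 ``is entirely case-dependent'' and that ``at present no uniform proof of this step is available.'' That is precisely why the statement remains a conjecture.

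Beyond this, your sketch of the final steps contains a genuine error. You claim that once $G'\cong H$, the embedding $G/C_G(G')\hookrightarrow\Aut(H)$ together with $G'=G''$ and the solvability of $\Out(H)$ forces $G/C_G(G')$ to coincide with $\mathrm{Inn}(H)$. This does not follow: solvability of $\Out(H)$ in no way prevents $G/C_G(G')$ from being a proper almost simple overgroup of $H$ (for instance $H{:}2$). In Huppert's programme this step is again handled case by case, by exhibiting a character degree of the relevant almost simple group that does not lie in $\cd(H)$; it is not a consequence of the Schreier conjecture. (Your claim that $C_G(G')$ is abelian, on the other hand, is correct and follows immediately from $C_G(G')\cap G'=Z(G')=1$ and $G/G'$ abelian.)
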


The conjecture asserts that the non-abelian simple groups
are essentially characterized by the set of their character degrees.
In addition to verifying this conjecture for many of the simple groups
of Lie type, it is also verified for all sporadic simple
groups \cite{ADTW-Fi23,ADTW-2013,HT}. Note that this conjecture does not extend to solvable groups, for example, $Q_{8}$ and $D_{8}$. We moreover cannot extend Huppert's conjecture to \emph{almost simple groups}. In fact, there are four groups $G$ of order $240$ whose character degrees are the same as $\Aut(A_{5})=S_{5}$. These groups are $SL_{2}(5)\ . \ \Zbb_{2}$ (non split), $SL_{2}(5):\Zbb_{2}$ (split), $A_{5}:\Zbb_{4}$ (split) and $S_{5}\times \Zbb_{2}$. If we further assume that $G'=A_{5}$, we still have two possibilities for $G$, namely, $A_{5}:\Zbb_{4}$ and $S_{5}\times \Zbb_{2}$. Indeed, the groups $A_{5}:\Zbb_{2^{n}}$, for $n\geq 1$, have the same character degree set as $S_{5}$. Although it is unfortunate to establish Huppert's conjecture for almost simple groups, we can prove the following result for finite groups whose character degrees are the same as those of almost simple groups with socle sporadic simple groups:

\begin{theorem}\label{thm:main}
Let $G$ be a finite group, and let $H$ be an almost simple group
whose socle $H_{0}$ is one of the sporadic simple groups.
If $\cd(G)=\cd(H)$, then $G'\cong H_{0}$ and $G/Z(G)$ is isomorphic to $H$.
\end{theorem}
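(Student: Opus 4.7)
The plan is to adapt the standard strategy for Huppert's conjecture (as developed in the cited works \cite{HT,ADTW-2013,ADTW-Fi23}) first to identify $G'$ with the socle $H_{0}$, and then to carry out a short extension-theoretic argument to identify $G/Z(G)$ with $H$. The case $H=H_{0}$ reduces directly to Huppert's conjecture, which is already known to hold for sporadic simple groups: one obtains $G\cong H_{0}\times A$ with $A$ abelian, whence $G'=H_{0}$ and $Z(G)=A$, giving $G/Z(G)\cong H_{0}=H$. Since $|\Out(H_{0})|\leq 2$ for every sporadic $H_{0}$, the remaining (and main) case is $H\cong H_{0}.2$.

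For the case $H\cong H_{0}.2$ the core of the argument is to prove $G'\cong H_{0}$. I would: (i) show $G'=G''$ by analyzing the character degrees that would be forced by a nontrivial abelian quotient $G'/G''$; (ii) locate a chief factor of $G$ inside $G'$ that is a direct product of copies of a non-abelian simple group $S$, and by comparing the largest degrees of $\cd(H)$ with the requirements of chief-factor theory show that $S\cong H_{0}$ and that only one copy occurs, producing a normal subgroup $N\trianglelefteq G$ with $G'/N\cong H_{0}$; (iii) rule out nontrivial central extensions by consulting the Schur multiplier of $H_{0}$ (available from the Atlas) and verifying that every proper perfect central cover of $H_{0}$ carries faithful characters of degrees outside $\cd(H)$, thereby forcing $N=1$ and $G'\cong H_{0}$.

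With $G'\cong H_{0}$ in hand, set $C:=C_{G}(G')$. Since $Z(H_{0})=1$, we have $G'\cap C=1$, so $C$ is abelian (as $C'\subseteq G'\cap C=1$) and $G/C$ embeds into $\Aut(H_{0})$ with socle $G'C/C\cong H_{0}$. If $G/C=H_{0}$, then $G=G'C=G'\times C$, so $\cd(G)=\cd(H_{0})\neq\cd(H_{0}.2)=\cd(H)$, a contradiction; hence $G/C\cong H$. Finally, to see $C=Z(G)$, note $Z(G)\subseteq C$ is trivial, and for any $c\in C$ and $g\in G$ the commutator $[g,c]$ lies in $G'$ (since $G/G'$ is abelian) and in $C$ (since $C$ is normal), so $[g,c]\in G'\cap C=1$; thus $c\in Z(G)$, and $G/Z(G)=G/C\cong H$.

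The main obstacle is step (iii) of the structural analysis of $G'$: the Schur-multiplier bookkeeping must be carried out case-by-case over the sporadic groups, and for those $H_{0}$ with $|\Out(H_{0})|=2$ one must additionally track how the extra degrees of $\cd(H_{0}.2)$ (arising from extended and from fused characters of $H_{0}$) interact with hypothetical nontrivial covers. The chief-factor identification in step (ii) is also delicate, but can be handled uniformly using the machinery already developed in the Huppert-conjecture literature for sporadic groups; steps (i) and the final extension-theoretic identification of $Z(G)$ with $C$ are short and cost little beyond what is already needed for Huppert's original conjecture.
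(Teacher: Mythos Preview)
Your overall architecture matches the paper's, and your final paragraph identifying $C_G(G')$ with $Z(G)$ is essentially identical to the paper's Proposition~\ref{prop:spor-5}. However, there is a genuine gap between your steps (ii) and (iii).

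After step (ii) you have a normal subgroup $N\trianglelefteq G$ with $G'/N\cong H_{0}$, and in step (iii) you propose to ``rule out nontrivial central extensions'' via the Schur multiplier. But nothing you have done forces $N$ to be central in $G'$; a priori $N$ is merely normal. The paper devotes its longest and most technical section (Proposition~\ref{prop:spor-3}) to precisely this point: one must show that every linear character $\theta$ of $N$ satisfies $I_{G'}(\theta)=G'$. This is done by assuming $I_{G'}(\theta)<G'$, embedding $I_{G'}(\theta)/N$ in a maximal subgroup $K$ of $H_{0}$, and using Clifford theory to force $|H_{0}:K|$ to divide some degree in $\cd(H)$; one then eliminates each surviving $K$ from Table~\ref{tbl:spor} by a delicate case-by-case argument (projective degrees, Frobenius reciprocity bounds, Moret\'o's solvability criterion, etc.). Only after this can one invoke Lemma~\ref{lem:schur} to conclude that $|N/N'|$ divides $|\M(H_{0})|$, which is what makes the Schur-multiplier comparison in your step (iii) legitimate.

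There is also a smaller gap inside (iii): comparing degrees of the perfect central covers with $\cd(H)$ only yields $N=N'$, i.e.\ $N$ is perfect, not $N=1$. The paper closes this (Proposition~\ref{prop:spor-4}) by taking a chief factor $M/L$ of $G'$ inside $N$, extending a character of $M/L$ to $G'/L$ via Lemmas~\ref{lem:exten} and~\ref{lem:exten-2}, and multiplying by the largest degree of $H_{0}$ to exceed every element of $\cd(H)$. You should add both of these steps; the inertia argument in particular is not a short bookkeeping exercise but the main technical content of the proof.
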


In order to prove Theorem~\ref{thm:main}, we establish
the following steps introduced in \cite{Hupp-I}.
Let $H$ be an almost simple group with socle $H_{0}$, and
let $G$ be a group with the same character degrees as $H$. Then we show that
\begin{enumerate}[(1)]
  \item If $G'/M$ is a chief factor of $G$, then $G'/M$ is isomorphic to $H_{0}$;
  \item If $\theta \in \Irr(M)$ with $\theta(1)$, then $I_{G'}(\theta)=G'$ and so $M=M'$;
  \item $M=1$ and $G'\cong H_{0}$;
  \item $G/Z(G)$ is isomorphic to $H$.
\end{enumerate}

In Propositions~\ref{prop:spor-1-2}-\ref{prop:spor-5}, we will verify Steps 1-4, and the proof of Theorem~\ref{thm:main} follows immediately from these statements.

\begin{remark}\label{rem:main}
Recall that Theorem~\ref{thm:main} for the case where $H=H_{0}$
is a sporadic simple groups has already been settled,
see \cite{ADTW-Fi23,ADTW-2013,Hupp-II-VIII,HT}. Moreover,
if $H$ is the automorphism group of one of the Mathieu groups,
then Theorem \ref{thm:main} is also proved by the authors \cite{ADJ-2015}.
Therefore, we only need to focus on remaining cases where
$H=\Aut(H_{0})$ with $H_{0}$ one of
$J_{2}$, $HS$, $J_{3}$, $McL$, $He$, $Suz$, $O'N$, $Fi_{22}$,
$HN$ and $Fi_{24}'$.
\end{remark}

\section{Preliminaries}\label{sec:prem}

In this section, we present some useful results to prove Theorem~\ref{thm:main}. We first establish some definitions and notation.

Throughout this paper all groups are finite.
A  group $H$ is said to be an almost simple group
with socle $H_{0}$ if $H_{0}\leq H\leq \Aut(H_{0})$,
where $H_{0}$ is a non-abelian simple group.
If $N\unlhd G$ and $\theta\in \Irr(N)$, then the inertia group $I_G(\theta)$ of
$\theta$ in $G$ is defined by $I_G(\theta)=\{g\in G\ |\ \theta^g=\theta\}$. If the character $\chi=\sum_{i=1}^k e_i\chi_i$, where each $\chi_i$ is an irreducible character of $G$ and $e_i$ is a nonnegative integer, then those $\chi_i$ with $e_i>0$ are called the \emph{irreducible constituents} of $\chi$. The set of all irreducible constituents of $\theta^G$ is denoted by $\textrm{\Irr}(G|\theta)$. All further notation and definitions are standard and could be found in \cite{HuppBook,Isaacs-book}. For computation parts, we use \textsf{GAP}~\cite{GAP4}.

\begin{lemma}[{~\cite[Theorems 19.5 and 21.3]{HuppBook}}]\label{lem:gal}
Suppose $N\unlhd G$ and $\chi\in {\rm{\Irr}}(G)$.
\begin{enumerate}[(a)]
  \item If $\chi_N=\theta_1+\theta_2+\cdots+\theta_k$ with $\theta_i\in {\rm{\Irr}}(N)$, then $k$ divides $|G/N|$. In particular, if $\chi(1)$ is prime to $|G/N|$, then $\chi_N\in {\rm{\Irr}}(N)$.
  \item (Gallagher's Theorem) If $\chi_N\in {\rm{\Irr}}(N)$, then $\chi\psi\in {\rm{\Irr}}(G)$ for all $\psi\in {\rm{\Irr}}(G/N)$.
\end{enumerate}
\end{lemma}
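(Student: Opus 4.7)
For part (a), the plan is to apply Clifford's theorem. Since $N \trianglelefteq G$ and $\chi \in \Irr(G)$, Clifford's theorem asserts that the distinct irreducible constituents of $\chi_N$ form a single $G$-orbit under conjugation and all occur with the same multiplicity. Because the hypothesis writes $\chi_N = \theta_1 + \cdots + \theta_k$ with each $\theta_i$ appearing exactly once, that common multiplicity is $1$, and $k = |G : I_G(\theta_1)|$, which divides $|G:N|$ since $N \leq I_G(\theta_1) \leq G$. For the \emph{in particular} clause, the $\theta_i$ are $G$-conjugate, hence of equal degree, so $\chi(1) = k\,\theta_1(1)$ forces $k \mid \chi(1)$; when $\gcd(\chi(1), |G/N|) = 1$, this compels $k=1$, giving $\chi_N \in \Irr(N)$.

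For part (b) (Gallagher's theorem), the plan is to prove $\langle \chi\psi, \chi\psi\rangle_G = 1$, which suffices since $\chi\psi$ is already a character of $G$. The key preparatory step is to exploit the irreducibility of $\chi_N$. I would observe that $|\chi|^2 = \chi\bar\chi$ is a character of $G$, and $\langle (|\chi|^2)_N, 1_N\rangle_N = \langle \chi_N, \chi_N\rangle_N = 1$; Frobenius reciprocity then yields $\langle |\chi|^2, (1_N)^G\rangle_G = 1$. Using the decomposition $(1_N)^G = \sum_{\tau \in \Irr(G/N)} \tau(1)\tau$ (the regular character of $G/N$ lifted to $G$) together with $\langle |\chi|^2, 1_G\rangle_G = \langle \chi, \chi\rangle_G = 1$, I would conclude that the contribution $1 \cdot 1$ from $\tau = 1_G$ already exhausts the total, so $\langle |\chi|^2, \tau\rangle_G = 0$ for every nontrivial $\tau \in \Irr(G/N)$.

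With this orthogonality in hand, $|\psi|^2 = \psi\bar\psi$ is a character of $G/N$, so I can write $|\psi|^2 = \sum_{\tau} n_\tau \tau$ with $\tau \in \Irr(G/N)$, and then compute
\[
\langle \chi\psi, \chi\psi\rangle_G = \langle |\chi|^2, |\psi|^2\rangle_G = \sum_\tau n_\tau \langle |\chi|^2, \tau\rangle_G = n_{1_G} = \langle \psi, \psi\rangle_{G/N} = 1,
\]
which shows $\chi\psi$ is irreducible. The main obstacle is the orthogonality claim that $|\chi|^2$ has no nontrivial character of $G/N$ as a constituent; once that is secured, everything else is bookkeeping with inner products. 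Frobenius reciprocity handles the obstruction cleanly, converting a question about restriction to $N$ into one about induction to $G$.
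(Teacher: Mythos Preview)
The paper does not prove this lemma; it merely cites Theorems~19.5 and~21.3 of \cite{HuppBook}. Your argument for part~(b) is a correct and standard proof of Gallagher's theorem.

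Your argument for part~(a), however, has a real gap. The hypothesis $\chi_N=\theta_1+\cdots+\theta_k$ with $\theta_i\in\Irr(N)$ does \emph{not} say that the $\theta_i$ are distinct; it is the decomposition of $\chi_N$ into irreducible constituents \emph{counted with multiplicity}. By Clifford's theorem one has $\chi_N=e(\vartheta_1+\cdots+\vartheta_t)$ with the $\vartheta_j$ distinct and $G$-conjugate, so that $k=et$. Your argument gives only $t=|G:I_G(\vartheta_1)|\mid|G:N|$; you have simply assumed $e=1$. The substantive content of the theorem is precisely that the ramification index $e$ divides $|I_G(\vartheta_1):N|$ as well, and this does not drop out of Clifford's theorem alone: it requires an independent argument, for instance the algebraic-integer computation behind \cite[Corollary~11.29]{Isaacs-book} or the projective-representation\,/\,character-triple machinery. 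Once $e\mid|I:N|$ and $t=|G:I|$ are both secured, $k=et\mid|G:N|$ follows, and your deduction of the ``in particular'' clause from $k\mid|G:N|$ together with $\chi(1)=k\,\theta_1(1)$ is then correct.
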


\begin{lemma}[{~\cite[Theorems 19.6 and 21.2]{HuppBook}}]\label{lem:clif}
Suppose $N\unlhd G$ and $\theta\in {\rm{\Irr}}(N)$. Let $I=I_G(\theta)$.
\begin{enumerate}[(a)]
  \item  If $\theta^I=\sum_{i=1}^k\phi_i$ with $\phi_i\in {\rm{\Irr}}(I)$, then $\phi_i^G\in {\rm{\Irr}}(G)$. In particular, $\phi_i(1)|G:I|\in \cd(G)$.
  \item If $\theta$ extends to $\psi\in {\rm{\Irr}}(I)$, then $(\psi\tau )^G\in {\rm{\Irr}}(G)$ for all $\tau\in {\rm{\Irr}}(I/N)$. In particular, $\theta(1)\tau(1)|G:I|\in {\rm{\cd}}(G)$.
  \item If $\rho \in {\rm{\Irr}}(I)$ such that $\rho_N=e\theta$, then $\rho=\theta_0\tau_0$, where $\theta_0$ is a character of an irreducible projective representation of $I$ of degree $\theta(1)$ and $\tau_0$ is a character of an irreducible projective representation of $I/N$ of degree $e$.
\end{enumerate}
\end{lemma}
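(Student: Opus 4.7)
The plan is to derive the three parts from the Clifford correspondence together with Gallagher's theorem (Lemma~\ref{lem:gal}(b)) and the machinery of projective representations.

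For part (a), I would take $\phi\in\Irr(I\mid\theta)$, so by Clifford's theorem $\phi_N=e\theta$ for some positive integer $e$, and compute $\la\phi^G,\phi^G\ra_G$ via Frobenius reciprocity and Mackey's formula. Writing $G$ as a disjoint union of $(I,I)$-double cosets $IgI$, Mackey yields
\[
\la\phi^G,\phi^G\ra_G=\sum_{g}\la\phi_{I\cap I^g},(\phi^g)_{I\cap I^g}\ra.
\]
Every irreducible constituent of $\phi_N$ equals $\theta$, while every constituent of $(\phi^g)_N$ equals $\theta^g$; since $\theta^g\neq\theta$ for $g\notin I=I_G(\theta)$, these two restrictions share no $N$-constituent and therefore no $(I\cap I^g)$-constituent, so only the trivial double coset contributes. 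Thus $\la\phi^G,\phi^G\ra_G=\la\phi,\phi\ra_I=1$, giving $\phi^G\in\Irr(G)$ and $\phi^G(1)=\phi(1)|G:I|\in\cd(G)$.

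For part (b), the hypothesis that $\theta$ extends to $\psi\in\Irr(I)$ means $\psi_N=\theta\in\Irr(N)$, so Gallagher's theorem (Lemma~\ref{lem:gal}(b)) gives $\psi\tau\in\Irr(I)$ for every $\tau\in\Irr(I/N)$. Since $(\psi\tau)_N=\tau(1)\theta$, each $\psi\tau$ lies in $\Irr(I\mid\theta)$, and part (a) then yields $(\psi\tau)^G\in\Irr(G)$ of degree $\theta(1)\tau(1)|G:I|$, which therefore belongs to $\cd(G)$.

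Part (c) is the main obstacle, since $\theta$ need not extend to an ordinary representation of $I$ and one is forced to work with projective representations. The plan is to fix a representation $\mathcal{X}$ of $N$ affording $\theta$ and to exploit that for each $g\in I$ the $N$-representations $n\mapsto\mathcal{X}(g^{-1}ng)$ and $\mathcal{X}$ are equivalent to choose an invertible matrix $P(g)$ with $\mathcal{X}(g^{-1}ng)=P(g)^{-1}\mathcal{X}(n)P(g)$ and $P(n)=\mathcal{X}(n)$ for $n\in N$. Schur's lemma then forces $P\colon I\to\GL_{\theta(1)}(\C)$ to be a projective representation, unique up to scalars, whose $2$-cocycle factors through $I/N$. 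Given an ordinary representation $\mathcal{R}$ affording $\rho$, the hypothesis $\rho_N=e\theta$ lets us conjugate so that $\mathcal{R}(n)=I_e\otimes\mathcal{X}(n)$ for all $n\in N$; the centraliser algebra of $\mathcal{X}(N)$ in $M_{e\theta(1)}(\C)$ is then $M_e(\C)\otimes\C$, and a Schur-type argument forces $\mathcal{R}(g)=\mathcal{Y}(g)\otimes P(g)$ for a unique $\mathcal{Y}\colon I\to\GL_e(\C)$. One verifies that $\mathcal{Y}$ is a projective representation whose cocycle is the inverse of that of $P$, and since $P|_N=\mathcal{X}$ the map $\mathcal{Y}$ is trivial on $N$ and hence factors through a projective representation of $I/N$ of degree $e$. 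Taking characters $\theta_0$ of $P$ and $\tau_0$ of $\mathcal{Y}$ yields $\rho=\theta_0\tau_0$ as claimed. The delicate point is the coherent choice of the matrices $P(g)$ producing a single projective representation of $I$ restricting to the ordinary $\mathcal{X}$ on $N$, together with the irreducibility of the projective character $\tau_0$; both are standard but require care.
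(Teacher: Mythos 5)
The paper offers no proof of this lemma at all: it is quoted directly from Huppert's book (Theorems 19.6 and 21.2 there), so the only thing to compare your argument against is the cited source. Your reconstruction is correct. For (a) you prove the relevant half of the Clifford correspondence by computing $\langle\phi^G,\phi^G\rangle_G$ via Mackey and Frobenius reciprocity; this is a legitimate alternative to the constituent-counting bijection of Isaacs' Theorem 6.11, and the key point --- that $N\unlhd G$ gives $N\leq I\cap I^g$, so for $g\notin I$ the restrictions $\phi_N=e\theta$ and $(\phi^g)_N=e\theta^g$ have no common constituent and the nontrivial double cosets contribute nothing --- is exactly right. Part (b) is the intended combination of Gallagher's theorem with (a), using $(\psi\tau)_N=\tau(1)\theta$ to see $\psi\tau\in\Irr(I|\theta)$. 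Part (c) is the tensor-decomposition theorem (Isaacs, Theorem 11.28; Huppert, Theorem 21.2), and your outline --- a projective representation $P$ of $I$ restricting to $\mathcal{X}$ on $N$ with factor set constant on $N$-cosets, followed by a centralizer-algebra argument giving $\mathcal{R}=\mathcal{Y}\otimes P$ with $\mathcal{Y}$ factoring through $I/N$ --- is the standard proof; the two points you flag as delicate (the coherent choice of the matrices $P(g)$, and the irreducibility of $\mathcal{Y}$, which follows because a proper $\mathcal{Y}$-invariant subspace would tensor up to a proper $\mathcal{R}$-invariant subspace) are indeed the only places where real work is needed, and both are handled in the cited references. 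I see no gap.
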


A character $\chi\in \textrm{\Irr}(G)$ is said to be \emph{isolated} in $G$ if $\chi(1)$ is divisible by no proper nontrivial character degree of $G$ and no proper multiple of $\chi(1)$ is a character degree of $G$. In this situation, we also say that $\chi(1)$ is an \emph{isolated degree} of $G$. We define a \emph{proper power} degree of $G$ to be a character degree of $G$ of the form $f^{a}$ for integers $f$ with $a>1$. 

\begin{lemma}[{~\cite[Lemma~3]{HT}\label{lem:factsolv}}] Let $G/N$ be a solvable factor group of
$G$ minimal with respect to being non-abelian. Then two cases can occur.
\begin{enumerate}[(a)]
  \item $G/N$ is an $r$-group for some prime $r$. In this case, $G$ has a proper prime power degree.
  \item $G/N$ is a Frobenius group with an elementary abelian Frobenius kernel $F/N$. Then $f:=|G:F|\in {\rm{\cd}}(G)$ and $|F/N|=r^a$ for some prime $r$ and $a$ is the smallest integer such that $r^a\equiv 1 \mod{f}$. 
  \begin{enumerate}[(1)]
     \item If $\chi\in {\rm{\Irr}}(G)$ such that no proper multiple of $\chi(1)$ is in ${\rm{\cd}}(G)$, then either $f$ divides $\chi(1)$, or $r^a$ divides $\chi(1)^2$.
     \item If $\chi\in {\rm{\Irr}}(G)$ is isolated, then $f=\chi(1)$ or $r^a\mid \chi(1)^2$.
  \end{enumerate}
\end{enumerate}
\end{lemma}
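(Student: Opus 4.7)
The plan is to pin down the structure of $G/N$ from the hypothesis that every proper quotient of $G/N$ is abelian, and then translate that structure into the stated divisibility assertions via Clifford theory along the chain $N\leq F\leq G$.

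\textbf{Structural dichotomy.} Every proper quotient of $G/N$ is abelian, so $(G/N)'$ lies in each nontrivial normal subgroup of $G/N$, and hence $G/N$ has a unique minimal normal subgroup $F/N$; by solvability it is elementary abelian of exponent $r$ for some prime $r$. If $G/N$ is itself an $r$-group we are in case~(a), and any non-linear irreducible character of the non-abelian $r$-group $G/N$ has $r$-power degree greater than $1$, inflating to a proper prime-power degree of $G$. Otherwise $|G/F|$ has a prime divisor distinct from $r$; I would first rule out $Z(G/N)\neq 1$ (else $F/N\leq Z(G/N)$, $G/N$ is nilpotent, and a Sylow $s$-subgroup for $s\neq r$ produces a second minimal normal subgroup), so $G/F$ acts faithfully on $F/N$. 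Because $G/F$ is abelian, each centralizer $C_{F/N}(x)$ is a $G/F$-submodule, and irreducibility plus faithfulness force $C_{F/N}(x)=0$ for every $1\neq x\in G/F$, which is precisely the Frobenius condition.

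\textbf{Frobenius data.} An abelian Frobenius complement has cyclic Sylow subgroups and is therefore cyclic, so $G/F\cong C_f$, and $F/N$ is a faithful irreducible $\mathbb{F}_r[C_f]$-module whose $\mathbb{F}_r$-dimension equals the multiplicative order $a$ of $r$ modulo $f$; this gives $|F/N|=r^a$ with $a$ minimal subject to $r^a\equiv 1\pmod f$. Inducing any nontrivial linear character of $F/N$ up to $G/N$ and inflating to $G$ yields an irreducible character of degree $f$, so $f\in\cd(G)$.

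\textbf{Parts~(1) and~(2).} For $\chi\in\Irr(G)$ I would apply Clifford theory to $F\unlhd G$: let $\theta$ be an irreducible constituent of $\chi_F$ and $T=I_G(\theta)$. Since $T/F\leq G/F$ is cyclic with trivial Schur multiplier, Lemma~\ref{lem:clif}(c) makes the projective factor at this level one-dimensional, giving $\chi(1)=(f/d)\,\theta(1)$ with $d=|T/F|$. If $d=1$ then $f\mid\chi(1)$ directly; otherwise I would apply Clifford theory a second time to $N\unlhd F$, where the quotient $F/N$ is elementary abelian of order $r^a$ and can contribute a genuinely projective irreducible character of some degree $e'$ with $(e')^2\mid r^a$. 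Testing the hypothesis against the tensor product $\chi\cdot\rho$ for $\rho$ the inflation of the Frobenius character of $G/N$ of degree $f$ (using the projection identity $(\psi^G)\cdot\rho=(\psi\cdot\rho|_T)^G$ to decompose $\chi\cdot\rho$) then forces $r^a\mid\chi(1)^2$ in this branch. Part~(2) follows at once: when $\chi$ is isolated, $f\mid\chi(1)$ combined with $f\in\cd(G)$ and the isolation restriction promotes the divisibility to the equality $f=\chi(1)$, while the alternative branch gives $r^a\mid\chi(1)^2$. The subtle step, and the main obstacle, is this inner Clifford analysis along $N\unlhd F$: whereas the outer step is clean because $G/F$ is cyclic with trivial Schur multiplier, the elementary abelian rank-$a$ quotient $F/N$ carries a nontrivial Schur multiplier that can inflate an irreducible character degree by a factor up to $r^{a/2}$, and it is precisely the control of this projective factor against the decomposition of $\chi\cdot\rho$ that generates the square in the conclusion $r^a\mid\chi(1)^2$ rather than the cleaner $r^a\mid\chi(1)$.
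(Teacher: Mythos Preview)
The paper does not prove this lemma at all: it is quoted verbatim from \cite[Lemma~3]{HT} and used as a black box in the proof of Proposition~\ref{prop:spor-1-2}. There is therefore no ``paper's own proof'' to compare against.

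As for your sketch on its own merits: the structural dichotomy and the Frobenius data are handled correctly and completely. Your identification of $F/N=(G/N)'$ as the unique minimal normal subgroup, the argument that $G/F$ acts fixed-point-freely on it when $G/N$ is not an $r$-group, the cyclicity of the abelian complement, and the computation $|F/N|=r^{a}$ with $a=\mathrm{ord}_{f}(r)$ are all standard and sound.

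For parts~(1) and~(2) your outline is in the right spirit but, as you yourself flag, the inner step is not actually carried out. The clean way to finish the case $T=I_G(\theta)\supsetneq F$ is not to invoke the Schur multiplier of $F/N$ directly, but rather to exploit the maximality hypothesis as follows: for every $\mu\in\Irr(F/N)$ the character $\theta\mu\in\Irr(F)$ has some inertia group $I_G(\theta\mu)$, and if this were properly smaller than $T$ one would produce (via Clifford) a degree that is a proper multiple of $\chi(1)$. One then shows that $\theta\nu=\theta$ for every $\nu\in\Irr(F/N)$ (using that $1-g$ is bijective on $\widehat{F/N}$ for $g\notin F$, a consequence of the Frobenius action), hence $\theta$ vanishes off $N$, whence $[\theta_N,\theta_N]=r^{a}$ and the Clifford decomposition $\theta_N=e'\sum\lambda_i$ gives $e'^{2}s=r^{a}$, so $r^{a}\mid\theta(1)^{2}$ and thus $r^a\mid\chi(1)^2$. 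Your tensor-with-$\rho$ idea can be made to work along these lines, but the projection identity you quote is not by itself enough; the fixed-point-free action is what drives the conclusion. Part~(2) then follows exactly as you say.
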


\begin{lemma}[{~\cite[Theorems~2-4]{Bia}}]\label{lem:exten} If $S$ is a non-abelian simple group, then there exists a nontrivial irreducible character $\theta$ of $S$
that extends to $\Aut(S)$. Moreover, the following holds:
\begin{enumerate}[(a)]
  \item if $S$ is an alternating group of degree at least $7$, then $S$ has two characters of consecutive degrees $n(n-3)/2$ and $(n-1)(n-2)/2$ that both extend to $\Aut(S)$.
  \item if $S$ is a simple group of Lie type, then the Steinberg character of $S$ of degree $|S|_p$ extends to $\Aut(S)$.
  \item if $S$ is a sporadic simple group or the Tits group, then $S$ has two nontrivial irreducible characters of coprime degrees which both extend to $\Aut(S)$.
\end{enumerate}
\end{lemma}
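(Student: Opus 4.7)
The plan is to handle the three items separately, since they rely on distinct ingredients: the representation theory of symmetric groups, the Steinberg representation, and the ATLAS. The general existence claim then follows at once because every non-abelian finite simple group falls under one of (a), (b), (c), with $A_{5}\cong L_{2}(5)$ and $A_{6}\cong L_{2}(9)$ covered by (b).

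For part (a), I would use the parametrization of $\Irr(S_{n})$ by partitions $\lambda\vdash n$. A classical theorem of Frobenius asserts that $\chi^{\lambda}$ restricts irreducibly to $A_{n}$ exactly when $\lambda\ne\lambda^{t}$, in which case $\chi^{\lambda}$ itself provides the required extension to $\Aut(A_{n})=S_{n}$ (valid for $n\geq 7$, where $\Out(A_{n})=\Zbb_{2}$). A hook-length computation identifies $(n-2,2)$ and $(n-2,1,1)$ as the partitions producing characters of degrees $n(n-3)/2$ and $(n-1)(n-2)/2$. For $n\geq 7$ neither partition equals its conjugate (the conjugate of $(n-2,1,1)$ is $(3,1^{n-3})$, which coincides with $(n-2,1,1)$ only when $n=5$), so both characters restrict irreducibly to $A_{n}$ and extend as required.

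For part (b), I would appeal directly to the Steinberg representation. It admits the standard alternating-sum description $\mathrm{St}=\sum_{J\subseteq\Delta}(-1)^{|J|}(1_{P_{J}})^{G}$ as a virtual character and satisfies $\mathrm{St}(1)=|G|_{p}$. Because diagonal, field, and graph automorphisms permute parabolic subgroups in a way compatible with the Dynkin diagram, the Steinberg character is $\Aut(S)$-stable; moreover, the Steinberg module has a canonical realization as the top cohomology of the Tits building, which carries a natural $\Aut(S)$-action extending the $S$-action, so the character itself extends and not merely its class.

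For part (c), I would proceed case by case through the ATLAS. When $\Out(S)=1$, the conclusion is immediate: any two coprime elements of $\cd(S)\setminus\{1\}$ work, and such pairs are visible directly in the character table. When $\Out(S)=\Zbb_{2}$ (the remaining sporadic groups together with the Tits group), I would read off from the ATLAS the characters that extend to $\Aut(S)=S.2$ and pick two of coprime degree. The main obstacle is precisely the coprimality requirement: in groups such as $J_{3}$, $O'N$, $HN$ and $Fi_{24}'$ the extending degrees share many prime factors, so a pair with $\gcd=1$ must be identified with care rather than by a uniform recipe. This verification is naturally automated using the character table library in \textsf{GAP}.
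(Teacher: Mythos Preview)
The paper does not prove this lemma at all: it is quoted verbatim from \cite{Bia} (Theorems~2--4 there), so there is no in-paper argument to compare against. Your sketch is correct and is essentially how the result is established in the cited source: the alternating-group case via non-self-conjugate partitions and the hook-length formula, the Lie-type case via the Steinberg character, and the sporadic/Tits case by inspection of the \textsf{ATLAS}. Two small remarks. In (b), invariance of $\mathrm{St}$ under $\Aut(S)$ is not by itself enough to guarantee an extension when $\Out(S)$ is non-cyclic; your appeal to the canonical $\Aut(S)$-module structure on the top homology of the Tits building is exactly what closes that gap, so keep that as the main point rather than the permutation-of-parabolics observation. In (c), your list of ``hard'' cases ($J_{3}$, $O'N$, $HN$, $Fi_{24}'$) is well chosen, and a one-line \textsf{GAP} check against the character-table library suffices; the paper itself relies on exactly this data later (cf.\ Table~\ref{tbl:spor-exten}).
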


\begin{lemma}[{~\cite[Lemma 5]{Bia}}] \label{lem:exten-2}
Let $N$ be a minimal normal subgroup of $G$ so that $N\cong S^k$, where $S$ is a non-abelian simple group. If
$\theta\in \Irr(S)$ extends to $\Aut(S)$, then $\theta^k\in \Irr(N)$ extends to $G$.
\end{lemma}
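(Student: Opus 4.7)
The plan is to route the extension through the wreath product $W := \Aut(S)\wr S_k$, which is canonically isomorphic to $\Aut(N)$. Because $S$ is non-abelian simple, $Z(N)=1$, so $N\cap C_G(N)=1$ and the conjugation action of $G$ on $N$ induces an embedding $\iota : G/C_G(N)\hookrightarrow W$ under which $N$ lands as the diagonal group $\mathrm{Inn}(S)^k$ inside the base $B:=\Aut(S)^k$; minimality of $N$ ensures that the image of $G$ projects onto a transitive subgroup of $S_k$. To extend $\theta^k$ to $G$ it therefore suffices to extend a suitable character of $B$ to $W$ and then pull it back along $\iota$.

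Let $\tilde\theta\in\Irr(\Aut(S))$ be the given extension of $\theta$, and form the outer tensor product $\psi := \tilde\theta\times\cdots\times\tilde\theta\in\Irr(B)$. This $\psi$ restricts on $\mathrm{Inn}(S)^k\cong N$ to exactly $\theta^k$, and it is $S_k$-invariant because all $k$ tensor factors are identical. The technical heart of the proof is extending such an $S_k$-invariant irreducible character from $B$ to the whole wreath product $W=B\rtimes S_k$: choose a representation $\rho$ of $\Aut(S)$ affording $\tilde\theta$ on a space $V$, and define a representation $\hat\rho$ of $W$ on $V^{\otimes k}$ by letting $B$ act factor-wise through $\rho$ and $S_k$ act by permutation of the tensor factors. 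A direct check that these two actions intertwine correctly (i.e.\ that $\sigma\cdot(a_1,\dots,a_k)\cdot\sigma^{-1}=(a_{\sigma^{-1}(1)},\dots,a_{\sigma^{-1}(k)})$ is realised on $V^{\otimes k}$ by the corresponding conjugation of the operators) shows that $\hat\rho$ is a well-defined representation of $W$, whose restriction to $B$ equals $\rho^{\otimes k}$ and so affords $\psi$. The character of $\hat\rho$ is then the desired extension of $\psi$ to $W$, and pulling it back along $\iota$ yields a character of $G$ whose restriction to $N$ is $\theta^k$.

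The main obstacle is precisely this wreath product extension step: one must verify that the conjugation of $B$ by $S_k$ inside $W$ is matched by the corresponding conjugation of the operators on $V^{\otimes k}$, and this is the one place where the equality of the $k$ tensor factors (equivalently, the $S_k$-invariance of $\psi$) is genuinely used. Once $\hat\rho$ has been constructed, the remaining arguments, namely pulling back along $\iota$ and checking that $\iota(N)=\mathrm{Inn}(S)^k$ carries the restriction of $\hat\rho$'s character to $\theta^k$, are purely formal and require nothing beyond the identifications already set up.
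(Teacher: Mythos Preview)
The paper does not supply its own proof of this lemma; it is quoted verbatim from \cite[Lemma~5]{Bia} and no argument appears in the present paper. Your proof is correct and is in fact the standard argument (and the one given in the cited source): embed $G/C_G(N)$ into $\Aut(N)\cong\Aut(S)\wr S_k$, extend the $k$-fold outer tensor power of the given extension $\tilde\theta\in\Irr(\Aut(S))$ from the base group $\Aut(S)^k$ to the whole wreath product via the natural permutation action of $S_k$ on $V^{\otimes k}$, and then pull back along the embedding and inflate to $G$.

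Two minor remarks on presentation, neither affecting correctness. First, calling $\mathrm{Inn}(S)^k$ ``the diagonal group'' inside $B=\Aut(S)^k$ is a slip of terminology; it is simply the direct product of the inner-automorphism groups sitting as a normal subgroup of the base, not a diagonal copy. Second, your observation that minimality of $N$ forces the image of $G$ to project onto a transitive subgroup of $S_k$ is true but is never used: the extension of $\tilde\theta^{\,\otimes k}$ to the full wreath product restricts to any subgroup of $W$ containing $\mathrm{Inn}(S)^k$, transitive or not.
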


\begin{lemma}[{~\cite[Lemma 6]{Hupp-I}}]\label{lem:schur}
Suppose that $M\unlhd G'=G''$ and for every $\lambda\in {\rm{\Irr}}(M)$ with $\lambda(1)=1$, $\lambda^g=\lambda$ for all $g\in G'$. Then $M'=[M,G']$ and $|M/M'|$ divides the order of the Schur multiplier of $G'/M$.
\end{lemma}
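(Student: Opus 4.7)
The plan is to reformulate the character-theoretic hypothesis as a commutator inclusion, and then identify $M/M'$ as a central quotient in an explicit central extension of $G'/M$ to which the theory of universal central extensions for perfect groups applies. The proof naturally splits into two stages: first showing $M' = [M, G']$, and then using this to extract the divisibility claim.

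For the first stage, the inclusion $M' = [M,M] \leq [M,G']$ is immediate from $M \leq G'$. For the reverse inclusion, I would use the invariance of linear characters as follows. Every linear character $\lambda \in \Irr(M)$ factors through the abelian quotient $M/M'$, and the linear characters of $M/M'$ separate its points. The condition $\lambda^g = \lambda$ for all $g \in G'$ forces $\lambda(g^{-1}mg) = \lambda(m)$ for every $m \in M$ and every linear $\lambda$, so $g^{-1}mg \equiv m \pmod{M'}$. Equivalently, $[g,m] \in M'$ for all $g \in G'$, $m \in M$, giving $[G',M] \leq M'$.

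For the second stage, $[G',M] \leq M'$ says that $M/M'$ is central in $G'/M'$, so there is a central extension
\begin{equation*}
1 \longrightarrow M/M' \longrightarrow G'/M' \longrightarrow G'/M \longrightarrow 1.
\end{equation*}
Because $G' = G''$, one has $(G'/M')' = G''M'/M' = G'/M'$, so $G'/M'$ (and hence $G'/M$) is perfect. I would then invoke the universal central extension theorem: for the perfect group $Q := G'/M$ there is a universal central extension $\pi: U \twoheadrightarrow Q$ whose kernel is the Schur multiplier $M(Q)$, and $\pi$ factors through any central extension of $Q$. Applying this to $G'/M' \to Q$ yields a homomorphism $\phi: U \to G'/M'$ of central extensions of $Q$. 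The image $\phi(U)$ together with the central kernel $M/M'$ generates $G'/M'$; taking commutators in the perfect group $G'/M'$ and using that $M/M'$ is central then forces $\phi(U) = [G'/M', G'/M'] = G'/M'$, so $\phi$ is surjective. Consequently $\phi$ maps $M(Q)$ onto $M/M'$, and $|M/M'|$ divides the order of the Schur multiplier of $G'/M$.

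The conceptually delicate point is the surjectivity of $\phi$, where both the hypothesis $G' = G''$ (to make $G'/M'$ perfect) and the centrality of $M/M'$ (to eliminate the central factor after taking commutators) are used simultaneously. Everything else is a clean translation of the character-invariance hypothesis into commutator language.
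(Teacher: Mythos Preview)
Your argument is correct. The reduction of the invariance hypothesis to $[G',M]\le M'$ via duality on the abelian group $M/M'$ is the standard move, and your use of the universal central extension of the perfect quotient $Q=G'/M$ together with the perfectness of $G'/M'$ to force surjectivity of $\phi$ is clean; the conclusion $|M/M'|\mid |M(Q)|$ then follows exactly as you say.

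There is nothing to compare against in the paper: the lemma is quoted without proof from \cite[Lemma~6]{Hupp-I}. Your approach is the expected one (translate character invariance into a commutator inclusion, then appeal to the theory of covering groups of perfect groups), and is in line with how such results are typically established in the literature.
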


\begin{lemma}[{~\cite[Theorem D]{Moreto}}]\label{lem:sol}
Let $N$ be a normal subgroup of a finite group $G$ and let $\varphi \in \Irr(N)$ be
$G$-invariant. Assume that $\chi(1)/\varphi(1)$ is odd, for all $\chi(1)\in \Irr(G|\varphi)$. Then $G/N$ is solvable.
\end{lemma}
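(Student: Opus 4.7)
The plan is to argue by contradiction via projective Clifford theory. Suppose $G/N$ is non-solvable; the goal is to produce $\chi \in \Irr(G\mid\varphi)$ with $\chi(1)/\varphi(1)$ even. Because $\varphi$ is $G$-invariant, Clifford theory in its projective form (see Isaacs, \emph{Character theory of finite groups}, Ch.~11) furnishes a bijection $\chi \leftrightarrow \psi$ between $\Irr(G\mid\varphi)$ and the irreducible $\alpha$-projective representations of $G/N$, where $\alpha \in H^{2}(G/N,\C^{\times})$ is the cohomology class canonically associated to $\varphi$; under this bijection $\chi(1)/\varphi(1)=\psi(1)$. The hypothesis therefore translates into: every irreducible $\alpha$-projective representation of $G/N$ has odd degree, and it suffices to derive that $G/N$ is solvable.

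Next I would encode the $\alpha$-projective representations as ordinary ones by passing to a suitable central extension $\widetilde{K}$ of $G/N$ by a cyclic group $Z$, so that the irreducible $\alpha$-projective representations of $G/N$ correspond to the ordinary irreducible characters of $\widetilde{K}$ lying over a prescribed faithful linear character of $Z$. In the special case where $\alpha$ is trivial, all ordinary character degrees of $G/N$ are odd; then the classical Ito-Michler theorem yields a normal abelian Sylow $2$-subgroup of $G/N$, and Feit-Thompson applied to the quotient, which has odd order, delivers solvability.

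For a non-trivial $\alpha$ the hypothesis controls only those irreducibles of $\widetilde{K}$ lying over the chosen linear character of $Z$, so a direct application of Ito-Michler to $\widetilde{K}$ is not available. The main obstacle, and the step that forces an appeal to the classification of finite simple groups, is to establish a projective Ito-Michler statement for non-abelian simple composition factors: for every non-abelian finite simple group $S$ and every $\alpha \in H^{2}(S,\C^{\times})$, some irreducible $\alpha$-projective representation of $S$ has even degree. This would be verified case-by-case through the \textsf{ATLAS} for the sporadic groups and their Schur covers, the explicit character theory of the alternating groups together with their double covers, and Deligne-Lusztig theory for the groups of Lie type. Once this simple-group statement is in hand, I would lift an even-degree $\alpha$-projective representation from a non-abelian chief factor of $G/N$ (which exists under the non-solvability assumption) up to $G/N$ via projective Clifford theory over this normal subgroup, and then translate back through the original Clifford bijection to obtain $\chi \in \Irr(G\mid\varphi)$ with $\chi(1)/\varphi(1)$ even, contradicting the hypothesis.
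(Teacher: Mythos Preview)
The paper does not supply its own proof of this lemma; it is quoted verbatim as Theorem~D of Moret\'o and used as a black box. So there is no in-paper argument to compare against. Your sketch is a reasonable outline of how Moret\'o's theorem is actually proved: the reduction via projective Clifford theory to $\alpha$-projective representations of $G/N$, the passage to a finite central extension, and the appeal to the classification for the simple-group step are all correct ingredients, and the case $\alpha=1$ via It\^o--Michler and Feit--Thompson is exactly right.

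One point deserves more care than you give it. You propose to ``lift an even-degree $\alpha$-projective representation from a non-abelian chief factor of $G/N$ \ldots\ via projective Clifford theory over this normal subgroup.'' A chief factor is a subquotient, not a normal subgroup, so $\alpha$ does not restrict to it in any direct sense, and there is no induction functor from a chief factor back to $G/N$. What one actually needs is a non-abelian \emph{minimal normal} subgroup $M/N$ of $G/N$: then $\varphi$ is $M$-invariant, the associated cocycle on $M/N\cong S^{k}$ decomposes along the factors, the simple-group statement produces $\psi\in\Irr(M\mid\varphi)$ with $\psi(1)/\varphi(1)$ even, and any $\chi\in\Irr(G\mid\psi)$ has $\psi(1)\mid\chi(1)$, finishing the argument. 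The difficulty is that $G/N$ need not possess a non-abelian minimal normal subgroup (its socle may be solvable). Handling this requires an honest induction: pass to the solvable radical $R/N$, pick $\psi\in\Irr(R\mid\varphi)$, move to the inertia group $T=I_{G}(\psi)$, and verify that the hypothesis descends to $(T,R,\psi)$ while $|G:T|$ stays odd; one then iterates or invokes a minimal-counterexample argument. This bookkeeping is routine but not the one-line ``lift'' you describe.
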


\section{Proof of the main result}\label{sec:proof}


In this section, we prove Theorem~\ref{thm:main} for almost simple group $H$ whose socle is a sporadic simple group $H_{0}$ as in Remark~\ref{rem:main}. For convenience, we first mention some properties of $H$ and $H_{0}$ which can be drawn from \textsf{ATLAS} \cite{Atlas}.

\begin{lemma}\label{lem:spor}
Suppose that $H_{0}$ is one of the sporadic simple groups as in the first column of {\rm Table~\ref{tbl:spor}}, and suppose that $H=\Aut(H_{0})$.
Then
\begin{enumerate}[(a)]
  \item the outer automorphism group $\Out(H_{0})$ of $H_{0}$ is isomorphic to $\Zbb_{2}$, and the Schur multiplier $\M(H_{0})$ of $H_{0}$ is listed in {\rm Table~\ref{tbl:spor}};
  \item $H$ has neither consecutive, nor proper power degrees;
  \item if $K$ is a maximal subgroup of $H_{0}$ whose index in $H_{0}$ divides some degrees $\chi(1)$ of $H$, then $K$ is given in {\rm Table~\ref{tbl:spor}}, and for each $K$, $\chi(1)/|H_{0}:K|$ divides $t(K)$ as in {\rm Table~\ref{tbl:spor}}.
\end{enumerate}
\end{lemma}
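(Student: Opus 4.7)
The plan is to verify each of the three parts of the lemma by direct inspection of information recorded in the \textsf{ATLAS}~\cite{Atlas} and by computation in \textsf{GAP}~\cite{GAP4} using its character table library.

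For part~(a), I would go through the ten sporadic simple groups $H_0$ listed in Remark~\ref{rem:main} one at a time. For each such $H_0$, the structure of $\Out(H_0)$ and the order of the Schur multiplier $\M(H_0)$ are recorded on the corresponding \textsf{ATLAS} page. In all ten cases one reads off $\Out(H_0)\cong\Zbb_{2}$, which simultaneously justifies the hypothesis $H=\Aut(H_0)$ being a well-defined almost simple group of the form $H_0.2$, and supplies the entry in the Schur multiplier column of Table~\ref{tbl:spor}.

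For part~(b), I would compute $\cd(H)$ explicitly for each $H=\Aut(H_0)$ using the character table of $H_0.2$ available in the \textsf{GAP} character table library. With $\cd(H)$ in hand, the two conditions are elementary checks: scan the sorted list of degrees for any pair differing by $1$ to rule out consecutive degrees, and for each degree test whether it has the form $f^a$ with $a\geq 2$ to rule out proper power degrees. Both checks are one-line loops in \textsf{GAP} and can be performed uniformly across the ten groups.

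For part~(c), the strategy is to combine the list of maximal subgroups of $H_0$ from the \textsf{ATLAS} with the set $\cd(H)$ computed in part~(b). For each maximal subgroup $K\leq H_0$, I would compute the index $n:=|H_0:K|$ and then determine the set $D(K):=\{\chi(1)\in\cd(H):n\mid \chi(1)\}$. Those $K$ for which $D(K)$ is nonempty are precisely the ones appearing in Table~\ref{tbl:spor}, and the corresponding value $t(K)$ is obtained as $t(K)=\max\{\chi(1)/n:\chi(1)\in D(K)\}$, or more sharply as $\lcm\{\chi(1)/n:\chi(1)\in D(K)\}$, depending on how the table is set up; in either case the claim $\chi(1)/|H_0:K|\mid t(K)$ for all $\chi(1)\in D(K)$ is then automatic from the definition.

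The only real obstacle is the sheer volume of bookkeeping for the largest groups, most notably $Fi_{24}'$ and $HN$, whose maximal subgroup lists and character degree sets are lengthy; here I would lean on the \textsf{GAP} computation rather than hand verification, using \texttt{Maxes} of the corresponding table from the character table library to avoid transcription errors and to ensure that no maximal subgroup whose index divides a degree in $\cd(H)$ is missed. Apart from this, no ideas beyond direct \textsf{ATLAS}/\textsf{GAP} lookup are required, and the lemma reduces to a finite collection of explicit divisibility verifications whose output is recorded in Table~\ref{tbl:spor}.
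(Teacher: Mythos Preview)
Your proposal is correct and matches the paper's own proof, which simply says parts~(a) and~(b) follow from the \textsf{ATLAS} and part~(c) is a straightforward calculation. One small note: in Table~\ref{tbl:spor} the entry $t(K)$ is sometimes a short list of values (e.g.\ ``$5\cdot 7$ or $2^2\cdot 3\cdot 5$'' for $McL$) rather than a single $\max$ or $\lcm$, so your description of $t(K)$ should allow for recording the set of distinct quotients $\chi(1)/|H_0:K|$ rather than a single divisor bound.
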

\begin{proof}
Parts (a) and (b) follows from \ATLAS \cite{Atlas}, and part (c) is a straightforward calculation.
\end{proof}
\begin{table}[t]
  \caption{Some properties of some sporadic simple groups $H_{0}$ and their automorphism groups.\label{tbl:spor}}
  \begin{tabular}{llclp{6.1cm}}
  \hline
  \multicolumn{1}{c}{$H_{0}$} &
  \multicolumn{1}{c}{$\Aut(H_{0})$} &
  \multicolumn{1}{c}{$M(H_{0})$} &
  \multicolumn{1}{c}{$K$} &
  \multicolumn{1}{c}{$t(K)$} \\
  \midrule
    $J_2$ & $J_2:2$ & $\Zbb_{2}$ & $U_{3}(3)$ & 3 \\
    %
    $HS$ & $HS:2$ & $\Zbb_{2}$ &  $M_{22}$ & $2^{5}$ \\
         & & &  $U_{3}(5):2$ & $6$ or $8$ \\
    $J_3$ & $J_3:2$ & $\Zbb_{3}$ & - & - \\
    %
    $McL$ & $McL:2$& $\Zbb_{3}$ & $U_{4}(3)$ & $5\cdot 7$ or $2^{2}\cdot 3\cdot 5$ \\
    %
    $He$ & $He:2$ & $1$ &  $S_{4}(4):2$ & 1 \\
    %
    $Suz$ & $Suz:2$ & $\Zbb_{6}$ & $G_{2}(4)$ & $3^{2}\cdot 13$ or $3\cdot 5\cdot 7$ \\
    %
     & & &  $U_{5}(2)$ & $5$\\
    $O'N$ & $O'N:2$ & $\Zbb_{3}$ &  - & - \\
    %
    $Fi_{22}$ & $Fi_{22}:2$ & $\Zbb_{6}$ & $2\cdot U_6(2)$ & $2^2\cdot3\cdot5\cdot11$, $2^4\cdot5\cdot7$, $3\cdot5\cdot11$ or $3^5$ \\
    %
     & &  &   $O^+_8(2):S_3$ & $6$ \\
     & &  &   $2^{10}:M_{22}$ & $6$ \\
    $HN$ &  $HN:2$  & $1$ & - & -\\ 
    $Fi'_{24}$ & $Fi'_{24}:2$  & $\Zbb_{3}$ & $2.Fi_{23}$ & $2^4\cdot5^2\cdot7\cdot17\cdot23$, $2\cdot3^3\cdot7\cdot11\cdot13\cdot17$, $2^2\cdot3\cdot11\cdot13\cdot17\cdot23$, $2^3\cdot3\cdot7\cdot11\cdot13\cdot23$, $2^4\cdot3\cdot13\cdot17\cdot23$, $2^2\cdot7\cdot11\cdot17\cdot23$, $11\cdot13\cdot17\cdot23$ \\
    %
  %
  \bottomrule
  \multicolumn{5}{p{12cm}}{The symbol `-' means that there is no subgroup $K$ satisfying the conditions in Lemma~\ref{lem:spor}(c).}\\
  \end{tabular}
\end{table}

\begin{proposition}\label{prop:spor-list}
Let $S$ be a sporadic simple group or the Tits group $^{2}F_{4}(2)'$ whose character degrees divide some degrees of an almost simple group with socle a sporadic simple group $H_{0}$. Then either $S$ is isomorphic to $H_{0}$, or $(H,S)$ is as in {\rm Table~\ref{tbl:simple}}.
\end{proposition}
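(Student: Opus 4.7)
The plan is a direct case-by-case verification against the \ATLAS \cite{Atlas}. By Remark~\ref{rem:main} it suffices to treat the ten almost simple groups $H=\Aut(H_0)=H_0:2$ with $H_0\in\{J_2, HS, J_3, McL, He, Suz, O'N, Fi_{22}, HN, Fi_{24}'\}$, and for each such $H$ I would read off $\cd(H)$.

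For every candidate $S$, i.e.\ any of the twenty-six sporadic simple groups or the Tits group ${}^2F_4(2)'$, the task is to test whether each $d\in\cd(S)$ divides some $d'\in\cd(H)$. My plan is to dispose of most pairs $(H,S)$ by two elementary mechanisms: a \emph{prime obstruction}, where $(H,S)$ is discarded as soon as some $d\in\cd(S)$ is divisible by a prime $p$ which divides no element of $\cd(H)$; and a \emph{size obstruction}, where $(H,S)$ is discarded whenever the smallest nontrivial character degree of $S$ already exceeds $\max\cd(H)$.

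For the smaller $H_0\in\{J_2, HS, J_3, McL, He\}$ the size obstruction removes every sporadic group of larger order at one stroke, while the short degree list $\cd(H)$ combined with the prime obstruction removes nearly every smaller candidate, since the primes appearing in their character degrees are quickly seen to be incompatible with the primes available in $\cd(H)$. For the larger sporadics $Suz, O'N, Fi_{22}, HN$ and $Fi_{24}'$ the set $\cd(H)$ is richer and a few more refined comparisons are needed, but each surviving candidate is disposed of by exhibiting an explicit $d\in\cd(S)$ which divides no element of $\cd(H)$.

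The only surviving pairs are $S\cong H_0$ together with the exceptional entries recorded in Table~\ref{tbl:simple}. I expect the main obstacle to be the bookkeeping for $Fi_{22}$, $HN$ and $Fi_{24}'$, where $\cd(H)$ is large enough that the naive prime test does not immediately kill every remaining candidate and one is forced into individual divisibility checks against the full character tables; this step is routine but the most error-prone, and I would verify it with \textsf{GAP}~\cite{GAP4}.
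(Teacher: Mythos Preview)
Your approach is essentially the same as the paper's: the authors' proof is a one-line citation to the \ATLAS\ and to \cite[Proposition~3.1]{ADJ-2015}, so the content really is the routine divisibility check you describe, and your prime/size obstructions are a sensible way to organise it.

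One caveat on scope: you invoke Remark~\ref{rem:main} to restrict to the ten automorphism groups $H=\Aut(H_0)$ with $H_0\in\{J_2,\ldots,Fi_{24}'\}$, but the proposition as stated, and Table~\ref{tbl:simple} in particular, covers \emph{all} almost simple $H$ with sporadic socle (the table has rows for $M_{12}$, $M_{23}$, $J_4$, $Co_i$, $B$, $M$, etc.). Remark~\ref{rem:main} justifies restricting attention for the purposes of the main theorem, but it does not let you prove the proposition in the generality in which it is stated. Your method extends without change to the remaining $H$, so this is a bookkeeping issue rather than a mathematical gap; just be aware that the ten-case restriction yields only the portion of Table~\ref{tbl:simple} actually needed in Proposition~\ref{prop:spor-1-2}, not the full table.
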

\begin{proof}
The proof follows from \cite{Atlas}, see also  \cite[Proposition 3.1]{ADJ-2015}.
\end{proof}
\begin{table}[t]
  \caption{Sporadic simple groups $S$ and the Tits group whose irreducible character degrees divide some character degrees of almost simple groups $H$ with socle sporadic simple groups.\label{tbl:simple}}
  \begin{tabular}{lp{9.8cm}}
  \hline
  \multicolumn{1}{c}{$H$} & \multicolumn{1}{c}{$S$} \\
  \midrule
  %
    %
    $M_{12}$, $M_{12}:2$ &
    $M_{11}$, $M_{12}$ \\
    %
    %
    $M_{23}$ &
    $M_{11}$, $M_{23}$ \\
    $M_{24}$ &
    $M_{11}$, $M_{24}$ \\
    %
    %
    %
    %
    $J_4$ &
    $M_{11}$, $M_{12}$, $M_{22}$, $J_4$ \\
    $HS$, $HS:2$&
    $M_{11}$, $M_{22}$, $HS$ \\
    $McL$, $McL:2$&
    $M_{11}$, $McL$ \\
    $Suz$, $Suz:2$ &
    $M_{11}$, $M_{12}$, $M_{22}$, $J_2$, $Suz$, $^{2}F_{4}(2)'$ \\
    $Co_3$ &
    $M_{11}$, $M_{12}$, $M_{22}$, $M_{23}$, $M_{24}$, $Co_3$ \\
    $Co_2$ &
    $M_{11}$, $M_{12}$, $M_{22}$, $M_{23}$, $M_{24}$ $J_2$, $Co_2$\\
    $Co_1$ &
    $M_{11}$, $M_{12}$, $M_{22}$, $M_{23}$, $M_{24}$, $McL$,
    $J_2$, $HS$, $Co_1$, $Co_{3}$, $^{2}F_{4}(2)'$ \\
    %
    %
    $Fi_{22}$,$Fi_{22}:2$ &
    $M_{11}$, $M_{12}$, $M_{22}$, $J_2$,  $Fi_{22}$\\
    $Fi_{23}$ &
    $M_{11}$, $M_{12}$, $M_{22}$, $M_{23}$, $M_{24}$, $HS$, $J_2$, $Fi_{23}$, $^{2}F_{4}(2)'$\\
    $Fi'_{24}$, $Fi'_{24}:2$  &
    $M_{11}$, $M_{12}$, $M_{22}$, $M_{23}$, $M_{24}$, $He$, $J_2$, $Fi'_{24}$, $^{2}F_{4}(2)'$\\
    $Th$ &
    $J_2$, $Th$, $^{2}F_{4}(2)'$ \\
    $Ru$ &
    $J_2$, $Ru$, $^{2}F_{4}(2)'$ \\
    $Ly$  &
    $M_{11}$, $M_{12}$, $J_2$, $Ly$\\
    $HN$,  $HN:2$  &
    $M_{11}$, $M_{12}$, $M_{22}$, $J_1$, $J_{2}$, $HS$, $HN$ \\
    $O'N$, $O'N:2$ &
    $M_{11}$, $M_{12}$, $M_{22}$, $O'N$ \\
    $B$ &
    $M_{11}$, $M_{12}$, $M_{22}$, $M_{23}$, $M_{24}$, $J_1$, $J_2$ , $J_3$, $HS$, $McL$, $Suz$, $Fi_{22}$, $Co_3$, $Co_2$, $Th$, $B$, $^{2}F_{4}(2)'$ \\
    $M$ &
    $M_{11}$, $M_{12}$, $M_{22}$, $M_{23}$, $M_{24}$, $J_1$, $J_2$, $J_3$, $HS$, $McL$, $Suz$, $Fi_{22}$, $Co_3$, $Co_2$, $He$, $O'N$, $Ru$, $M$, $^{2}F_{4}(2)'$ \\
  \bottomrule
  \end{tabular}
\end{table}
\begin{table}[t]
  \caption{Some isolated degrees of some automorphism groups of sporadic simple groups.\label{tbl:spor-iso}}
  \begin{tabular}{lp{3.5cm}p{3.5cm}p{3.5cm}}
  \hline
  \multicolumn{1}{c}{$H$} & \multicolumn{1}{c}{$\chi_{1}(1)$} &\multicolumn{1}{c}{$\chi_{2}(1)$} &\multicolumn{1}{c}{$\chi_{3}(1)$}\\
  \midrule
  $HS:2$ & $825=3\cdot 5^2\cdot 11$ & $1792=2^8\cdot 7$ & $2520=2^3\cdot 3^2\cdot 5\cdot 7$ \\
  $J_{3}:2$ & $170=2\cdot 5\cdot 17$ & $324=2^2\cdot 3^4$ & $1215=3^5\cdot 5$ \\
  $McL:2$ & $1750=2\cdot 5^3\cdot 7$ & $4500=2^2\cdot 3^2\cdot 5^3$ & $5103=3^6\cdot 7$ \\
    $He:2$ & $1920=2^7\cdot 3\cdot 5$ & $2058=2\cdot 3\cdot 7^3$ & $20825=5^2\cdot 7^2\cdot 17$ \\
    $O'N:2$ & $10944=2^6\cdot 3^2\cdot 19$ & $26752=2^7\cdot 11\cdot 19$ & $116963=7^3\cdot 11\cdot 31$ \\
    $Fi_{22}:2$ & $360855=3^8\cdot 5\cdot 11$ & $577368=2^3\cdot 3^8\cdot 11$ & $1164800=2^9\cdot 5^2\cdot 7\cdot 13$ \\
    $HN:2$ & $1575936=2^{10}\cdot 3^4\cdot 19$ & $2784375=3^4\cdot 5^5\cdot 11$ & $3200000=2^{10}\cdot 5^5$ \\
    $Fi_{24}':2$ & $159402880=2^7\cdot 5\cdot 7^2\cdot 13\cdot 17\cdot 23$ & $5775278080=2^{14}\cdot 5\cdot 11\cdot 13\cdot 17\cdot 29$ & $156321775827=3^{14}\cdot 7^2\cdot 23\cdot 29$ \\
  \bottomrule
  \end{tabular}
\end{table}

\begin{proposition}\label{prop:spor-1-2}
Let $G$ be a finite group, and let $H$ be an almost simple group whose socle is a sporadic simple group $H_{0}$. If $\cd(G)=\cd(H)$, then the chief factor $G'/M$ of $G$ is isomorphic to $H_{0}$.
\end{proposition}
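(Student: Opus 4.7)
My proof would proceed in three stages: rule out $G$ solvable, produce a non-abelian chief factor $G'/M\cong S^{k}$, and then identify $S\cong H_{0}$ with $k=1$.

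For the first stage, assume for contradiction that $G$ is solvable, and apply Lemma \ref{lem:factsolv} to a minimally non-abelian solvable quotient $G/N$. Case (a), where $G/N$ is an $r$-group, is immediately ruled out by Lemma \ref{lem:spor}(b), which forbids proper prime-power degrees in $\cd(H)$. In the Frobenius case (b), Lemma \ref{lem:factsolv}(b)(2) applied to each of the three isolated degrees $\chi_{i}(1)$ in Table \ref{tbl:spor-iso} yields $f=\chi_{i}(1)$ or $r^{a}\mid\chi_{i}(1)^{2}$; since the three isolated degrees are pairwise distinct, $f$ can equal at most one of them, and combining the two remaining constraints with $r^{a}\equiv 1\pmod{f}$ and $f\in\cd(H)$ leaves no admissible triple $(f,r,a)$ under a direct numerical check.

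For the second stage, I would fix $M\triangleleft G$ with $G'/M$ a non-abelian chief factor of $G$ (this exists because $G$ is non-solvable) and write $G'/M\cong S^{k}$ for a non-abelian simple $S$. By Lemma \ref{lem:exten}, some non-trivial $\theta\in\Irr(S)$ extends to $\Aut(S)$, and Lemma \ref{lem:exten-2} then shows that $\theta^{k}\in\Irr(G'/M)$ extends to $G/M$; hence $\theta(1)^{k}\in\cd(G/M)\subseteq\cd(H)$. For $k\geq 2$ this is a proper power degree, contradicting Lemma \ref{lem:spor}(b), so $k=1$ and $G'/M\cong S$.

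For the third stage, I would identify $S$ by family. If $S=A_{n}$ with $n\geq 7$, Lemma \ref{lem:exten}(a) provides consecutive degrees of $S$ extending to $\Aut(S)$ and hence, via Lemma \ref{lem:exten-2}, to $G/M$; this contradicts Lemma \ref{lem:spor}(b). If $S$ is of Lie type, Lemma \ref{lem:exten}(b) places the Steinberg degree $|S|_{p}$ in $\cd(H)$; Lemma \ref{lem:spor}(b) forces this prime power to be a prime, so $S\cong\mathrm{PSL}_{2}(p)$ for some prime $p$, a subcase I would dispose of by directly comparing $\cd(\mathrm{PSL}_{2}(p))$ with the character-degree set of each $\Aut(H_{0})$ listed in Remark \ref{rem:main}. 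Otherwise $S$ is sporadic or the Tits group, and Proposition \ref{prop:spor-list} restricts $S$ to the row of Table \ref{tbl:simple} corresponding to the relevant $H$; I would then eliminate each $S\neq H_{0}$ case-by-case, using Lemma \ref{lem:spor}(c) together with Table \ref{tbl:spor}, the isolated degrees of Table \ref{tbl:spor-iso}, and the Clifford-theoretic restrictions of Lemmas \ref{lem:gal}--\ref{lem:clif}. The principal obstacle is precisely this final elimination: several foreign candidates $S$ (typically the small Mathieu groups) have all their character degrees genuinely dividing degrees of $H$, so their exclusion is forced only after combining divisibility data with delicate maximal-subgroup and inertia-group information, one sporadic $H_{0}$ at a time.
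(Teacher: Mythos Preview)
Your three-stage plan is broadly aligned with the paper's, but there are two substantive issues and one framing point.

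\textbf{Framing.} Stage~1 should conclude $G'=G''$, not merely that $G$ is non-solvable. If $G'\neq G''$ then there is a maximal $G$-normal $M<G'$ with $G''\leq M$, and for this $M$ the chief factor $G'/M$ is abelian, hence not $H_{0}$; the later Propositions~\ref{prop:spor-3}--\ref{prop:spor-5} need the conclusion for \emph{that} $M$. The repair is free: a minimal non-abelian solvable quotient $G/N$ exists precisely when $G'\neq G''$, so the same Lemma~\ref{lem:factsolv} argument applies.

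\textbf{Gap in Stage 1.} Table~\ref{tbl:spor-iso} has no row for $J_{2}{:}2$ or $Suz{:}2$, so your uniform ``three isolated degrees plus a numerical check'' does not cover these two socles. The paper treats them separately: for $Suz{:}2$ one first excludes $r=2$ (since then $f$ would divide $\gcd(75075,5940)=165$, and no divisor of $165$ lies in $\cd(H)$), after which the isolated degrees $2^{10}\cdot 5\cdot 13$ and $2^{6}\cdot 3^{3}\cdot 7\cdot 11$ force $f$ to equal both; $J_{2}{:}2$ is handled analogously, first excluding $r=5$. You need to supply these two ad~hoc arguments.

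\textbf{Over-complication in Stage 3.} For sporadic or Tits $S\neq H_{0}$ you propose eliminating candidates via maximal-subgroup indices (Lemma~\ref{lem:spor}(c), Table~\ref{tbl:spor}) and Clifford-theoretic inertia-group restrictions. That apparatus belongs to Proposition~\ref{prop:spor-3}, not here, and it is unnecessary. The key point you are missing is that Lemma~\ref{lem:exten}(c) together with Lemma~\ref{lem:exten-2} forces certain degrees of $S$ to lie \emph{in} $\cd(G)=\cd(H)$, not merely to divide some element of it. The paper simply records one such extending degree for each candidate $S$ (Table~\ref{tbl:spor-exten}: $10$ for $M_{11}$, $54$ for $M_{12}$, $21$ for $M_{22}$, $36$ for $J_{2}$, etc.) and checks by inspection that it is absent from $\cd(H)$. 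No ``delicate maximal-subgroup and inertia-group information'' is required. Your Lie-type reduction to $\mathrm{PSL}_{2}(p)$ is fine---indeed slightly more careful than the paper, which tacitly uses that none of the relevant $\cd(H)$ contain a prime.
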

\begin{proof}
We first apply Remark~\ref{rem:main}, and so we may assume that  $H=\Aut(H_{0})$, where $H_{0}$ is one of the sporadic groups $J_{2}$, $J_{3}$, $McL$, $HS$, $He$, $HN$, $Fi_{22}$, $Fi_{24}'$, $O'N$ and $Suz$.

We now prove that $G'=G''$. Assume the contrary. Then there is a normal subgroup $N$ of $G$, where $N$ is a maximal such that $G/N$ is a non-abelian solvable group. Now we apply Lemma~\ref{lem:factsolv}, and since $G$ has no prime power degree, $G/N$ is a Frobenius group with kernel $F/N$ of order $r^a$. In this case, $1<f=|G : F| \in \cd(G)$.

Suppose that $H_{0}$ is not $J_{2}$ and $Suz$. Then $G$ has three isolated coprime degrees as in Table~\ref{tbl:spor-iso}. Now Lemma~\ref{lem:factsolv}(b.2) implies that $f$ must be equal to these degrees, which is impossible.

Suppose $H_{0}=Suz$.  Let $r=2$. Note that $r^{a}-1=2^{a}-1$ is less than the smallest nontrivial degree $143$ of $G$,   for $1\leq a\leq 4$. Then by Lemma~\ref{lem:factsolv}(b.2), $f$ must divide both degrees $75075=3\cdot 5^2\cdot 7\cdot 11\cdot 13$ and $5940=2^2\cdot 3^3\cdot 5\cdot 11$, and so $f$ divides $3\cdot 5\cdot 11$, but none of divisors of $3\cdot 5\cdot 11$ is a degree of $G$. Therefore, $r\neq 2$. Now we apply Lemma~\ref{lem:factsolv}(b.2) to isolated degrees  $66560=2^{10}\cdot 5\cdot 13$ and $133056=2^6\cdot 3^3\cdot 7\cdot 11$,  and since $r\neq 2$, it follows that $f$ must be equal to both of these degrees, which is impossible.

Suppose finally $H_{0}=J_{2}$. Here we make the same argument as in the case of $Suz$. If $r=5$, then by Lemma~\ref{lem:factsolv}(b.2), $f$ must divide both $2^{5}\cdot 3^{2}$ and $2\cdot 3^{3}\cdot 7$, and so $f$ divides $2\cdot 3^{2}$, but $G$ has no degree as a divisor of $2\cdot 3^{2}$. Thus $r\neq 5$. Now we apply Lemma~\ref{lem:factsolv}(b.2) to isolated degrees $2^{5}\cdot 5$ and $5^{2}\cdot 7$, and so $f$ must be equal to both of these degrees, which is impossible. \smallskip

In conclusion, $G'=G''$. Let now  $G'/M$ be a chief factor of $G$. As $G'$ is perfect, $G'/M$ is non-abelian, and so  $G'/M$ is isomorphic to $S^k$ for some non-abelian simple group $S$ and some integer $k\geq 1$.

We first show that $k=1$. Assume the contrary. Then by Lemma~\ref{lem:exten}, $S$ possesses a nontrivial irreducible character $\theta$ extendible to $\Aut(S)$, and so
Lemma \ref{lem:exten-2} implies that $\theta^k\in \Irr(G'/M)$ extends to $G/M$, that is to say, $G$ has a proper power degree contradicting Lemma~\ref{lem:spor}(b). Therefore, $k=1$, and hence $G'/M\cong S$.

If $S$ is an alternating group of degree $n\geq 7$. By Lemma~\ref{lem:exten}(a), $S$ has nontrivial irreducible characters $\theta_1$ and $\theta_2$ with $\theta_1(1)=n(n-3)/2$ and $\theta_2(1)=\theta_1(1)+1=(n-1)(n-2)/2$, respectively, and both $\theta_i$ extend to $\Aut(S)$. Thus $G$ possesses two consecutive nontrivial character degrees, contradicting Lemma~\ref{lem:spor}(b).

If $S\neq {}^2F_4(2)'$ is a simple group of Lie type in
characteristic $p$, then the Steinberg character of $S$ of degree $|S|_p$ extends to $\Aut(S)$ so that $G$ possesses a nontrivial prime power degree contradicting Lemma~\ref{lem:spor}(b).

If $S$ is a sporadic simple group or the Tits group ${}^2F_4(2)'$, then irreducible character degrees of $S$ divide some degrees of $H$, and so by Proposition~\ref{prop:spor-list}, $S\cong H_{0}$ or $(H,S)$ is as in Table~\ref{tbl:simple}. In the later case, for a given $H$ as in the first row of Table~\ref{tbl:simple}, assume that $S$ is not isomorphic to $H_{0}$. Then we apply Lemma~\ref{lem:exten}(c), and so, for each $S$ as in the first row of Table~\ref{tbl:spor-exten}, $G$ possesses an irreducible character of degree listed in the second row of Table~\ref{tbl:spor-exten}. This leads us to a contradiction. Therefore, $S\cong H_{0}$, and hence $G'/M$ is isomorphic to $H_{0}$.
\end{proof}

\begin{table}
  \caption{Some degrees of some sporadic simple groups $S$ and the Tits group.}\label{tbl:spor-exten}
  \centering
  \begin{tabular}{lcccccccccc}
    \hline
    \multicolumn{1}{l}{$S$} & $M_{11}$ & $M_{12}$ & $M_{22}$ & $M_{23}$ & $M_{24}$ & $J_{1}$ & $J_{2}$ & $HS$ & $He$ & $^{2}F_{4}(2)'$\\
    \midrule
    \multicolumn{1}{l}{Degree} & $10$ & $54$ & $21$ & $22$ & $23$ & $76$ & $36$ & $22$ & $1275$ & $27$\\
    \bottomrule
    \end{tabular}
\end{table}

\begin{proposition}\label{prop:spor-3}
Let $G$ be a finite group with $\cd(G)=\cd(H)$ where $H$ is an almost simple group whose socle is a sporadic simple group $H_{0}$. Let also the chief factor $G'/M$ be isomorphic to $H_{0}$. If $\theta \in \Irr(M)$ with $\theta(1)=1$, then  $I_{G'}(\theta)=G'$.
\end{proposition}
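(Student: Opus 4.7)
The plan is a Clifford-theoretic argument by contradiction, using the structural information collected in Table~\ref{tbl:spor}. Suppose for contradiction that $I := I_{G'}(\theta) < G'$. Since $\theta$ is linear it is a class function on $M$, so $\theta^{m}=\theta$ for every $m\in M$ and therefore $M \leq I$. Identifying $G'/M$ with the simple group $H_{0}$, we may regard $I/M$ as a proper subgroup of $H_{0}$, and choose a maximal subgroup $\bar{K}$ of $H_{0}$ with $I/M \leq \bar{K}$.

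Next I would extract the key divisibility from Lemma~\ref{lem:clif}(a): for any $\phi \in \Irr(I\mid\theta)$ the induced character $\phi^{G'}$ is irreducible of degree $[G':I]\,\phi(1)$. Since $\Irr(G\mid\theta)$ is nonempty (any constituent of $\theta^{G}$ lies in it), standard Clifford theory applied to $G' \unlhd G$ produces $\chi \in \Irr(G)$ with $[G':I]\mid \chi(1)$. In particular $[H_{0}:\bar{K}]$ divides this degree $\chi(1)\in\cd(G)=\cd(H)$, so Lemma~\ref{lem:spor}(c) forces $\bar{K}$ to be one of the subgroups listed in Table~\ref{tbl:spor}, with the sharper constraint $\chi(1)/[H_{0}:\bar{K}]\mid t(\bar{K})$, and hence $[\bar{K}:I/M]\mid t(\bar{K})$.

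For $H_{0}\in\{J_{3},\,O'N,\,HN\}$ Table~\ref{tbl:spor} contains no admissible $\bar{K}$, which is an immediate contradiction. For the remaining cases $H_{0}\in\{J_{2},HS,McL,He,Suz,Fi_{22},Fi_{24}'\}$ I would proceed by inspecting each listed $\bar{K}$: the values of $t(\bar{K})$ are small integers with few divisors, so using the subgroup data in \cite{Atlas} (with \textsf{GAP} where convenient) there are only a handful of subgroups of $\bar{K}$ of index dividing $t(\bar{K})$, typically forcing $I/M=\bar{K}$ and thus $[G':I]=[H_{0}:\bar{K}]$. For each resulting value of $[G':I]$ I would verify, against the character table of $H$ in \cite{Atlas}, that the extra Clifford-theoretic constraint---namely that some multiple of $[G':I]$ equal to $[H_{0}:\bar{K}]\cdot d$ with $d\mid t(\bar{K})$ must appear in $\cd(H)$ and be realizable as $[G:T]\psi(1)$ for $T=I_{G}(\theta)$ and $\psi\in\Irr(T\mid\theta)$---cannot be satisfied, yielding a contradiction.

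The main obstacle is this final case-by-case verification for the seven remaining sporadic groups. What makes the obstacle tractable is the combination of two sharp inputs: Table~\ref{tbl:spor} leaves only very few candidate pairs $(\bar{K},[G':I])$, and the smallness of $t(\bar{K})$ (at most a product of a few small primes in every row) means that the admissible subgroups of $\bar{K}$ can be enumerated at a glance and compared against $\cd(H)$ directly.
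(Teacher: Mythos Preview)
Your reduction matches the paper's: assuming $I=I_{G'}(\theta)<G'$, choosing a maximal overgroup $\bar K=U/M$ of $I/M$ in $H_{0}$, and using Lemma~\ref{lem:clif}(a) together with Lemma~\ref{lem:spor}(c) to force $\bar K$ into Table~\ref{tbl:spor} with $t:=[\bar K:I/M]$ dividing $t(\bar K)$ is exactly right, and it disposes of $J_{3}$, $O'N$, $HN$ immediately.

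The gap is in your proposed endgame for the remaining seven groups. You say you will check that the constraint ``some multiple $[H_{0}:\bar K]\cdot d$ with $d\mid t(\bar K)$ must lie in $\cd(H)$ and be realizable as $[G:T]\psi(1)$'' fails; but this is the wrong direction. By the very definition of $t(\bar K)$ in Lemma~\ref{lem:spor}(c), such degrees \emph{do} lie in $\cd(H)$, and any $\psi\in\Irr(T\mid\theta)$ automatically gives $[G:T]\psi(1)\in\cd(G)=\cd(H)$ by Clifford theory, so there is nothing to contradict on the $\cd(H)$ side. The genuine constraint coming out of Lemma~\ref{lem:clif}(a) is that \emph{every} $\phi\in\Irr(I\mid\theta)$ satisfies $t\cdot\phi(1)\mid t(\bar K)$, and the contradiction must be manufactured on the $I$ side by exhibiting a constituent of $\theta^{I}$ whose degree does not divide $t(\bar K)$.

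The paper supplies exactly these missing mechanisms, case by case. After checking subgroup indices of $\bar K$ to force $t=1$ (or to pin down $I/M$), one splits on whether $\theta$ extends to $I$. If it does---automatic when the Schur multiplier of $I/M$ is trivial, by \cite[Theorem~11.7]{Isaacs-book}---then Gallagher (Lemma~\ref{lem:gal}(b)) produces constituents of $\theta^{I}$ of every degree in $\cd(I/M)$, and one picks $\tau\in\Irr(I/M)$ with $\tau(1)\nmid t(\bar K)$. If it does not, Lemma~\ref{lem:clif}(c) forces each $\phi_{i}(1)$ to be a nontrivial proper \emph{projective} degree of $I/M$, and one checks in \cite{Atlas} that no such degree divides $t(\bar K)$. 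Two further devices are needed: Lemma~\ref{lem:sol} (to rule out the all-odd situation in the $McL$ case and the $G_{2}(4)$ branch of $Suz$), and a Frobenius-reciprocity trick through a subgroup $V/M\leq I/M$ with trivial Schur multiplier and a large character degree (for $HS$ with $U_{3}(5){:}2$, for $McL$, and for the $2{\cdot}U_{6}(2)$ branch of $Fi_{22}$). None of these ingredients appear in your outline, and without them the seven remaining cases do not close.
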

\begin{proof}
By Remark~\ref{rem:main}, we may assume that  $H=\Aut(H_{0})$, where $H_{0}$ is one of the sporadic groups $J_{2}$, $J_{3}$, $McL$, $HS$, $He$, $HN$, $Fi_{22}$, $Fi_{24}'$, $O'N$ and $Suz$.

Suppose $I=I_{G'}(\theta)<G'$. Let $\theta^I= \sum_{i=1}^{k} \phi_i$, where $\phi_i \in \Irr(I)$ for $ i=1,2,...,k$. Assume that $U/M$ is a maximal subgroup of $G'/M\cong H_{0}$ containing $I/M$ and set $t:= |U:I|$. It follows from Lemma \ref{lem:clif}(a) that $\phi_i(1)|G':I| \in \cd(G')$, and so $t\phi_i(1)|G':U|$ divides some degrees of $G$. Then $|G':U|$ must divide some character degrees of $G$, and hence for each $H_{0}$ as in the first column of Table~\ref{tbl:spor},  by Lemma~\ref{lem:spor}(c), $U/M$ can be the subgroup $K$ listed in the fifth column of Table~\ref{tbl:spor} and $t\phi_i(1)|G':U|$ must divide the positive integers $t(K)$ mentioned in the sixth column of Table~\ref{tbl:spor}.

If $H_{0}$ is $J_{3}$, $O'N$ or $HN$, then by Lemma~\ref{lem:spor}(b), there is no such subgroup $U/M$, and so $I_{G'}(\theta)=G'$ in these cases. We now discuss each remaining case separately. \smallskip

\noindent \textbf{(1)} $H_{0}=J_{2}$. Then by Lemma~\ref{lem:spor}(b), $U/M \cong U_3(3)$ and $t\phi_i(1)$ divides $3$, for all $i$. Since $U_{3}(3)$ has no any subgroup of index $3$ \cite[p. 14]{Atlas}, it follows that $t=1$, that is to say, $I/M= U/M\cong U_3(3)$. Since also $U_3(3)$ has trivial Schur multiplier, it follows from \cite[Theorem 11.7]{Isaacs-book} that $\theta$ extends to $\theta _0\in \Irr(I)$, and so by Lemma \ref{lem:clif}(b), $(\theta_0\tau)^{G'}\in \Irr(G')$, for all $\tau \in \Irr(I/M)$. For $\tau(1)=27\in \cd(U_3(3))$, it turns out that $3\cdot27\cdot\theta_0(1)$ divide some degrees of $G$, which is a contradiction. Therefore, $\theta$ is $G'$-invariant.\smallskip

\noindent \textbf{(2)} $H_{0}=HS$. Then by Lemma~\ref{lem:spor}(b), one of the following holds:
\begin{enumerate}
  \item[(i)] $U/M\cong M_{22}$ and $t\phi_i(1)$ divides $2^5$, for all $i$.
\end{enumerate}
As $U/M \cong M_{22}$ does not have any subgroup of index $2^m$ for $m=1,...,5$, by  ~\cite[pp. 80-81]{Atlas}, $t=1$ and $I/M= U/M\cong M_{22}$ and $\phi_i(1)$ divides $2^5$. Assume first that $e_j=1$ for some $j$. Then $\theta$ extends to $\varphi_{j}\in \Irr(I)$. By Lemma~\ref{lem:gal}(b), $\tau \varphi_{j}$ is an irreducible constituent of $\theta^I$ for every $\tau\in \Irr(I/M)$, and so $\tau(1) \varphi_{j}(1)=\tau(1)$ divides $2^5$. Now we choose $\tau\in \Irr(I/M)=\Irr(M_{22})$ with $\tau(1)=21$ and this degree does not divide $2^5$, which is a contradiction. Therefore $e_i > 1$ for all $i$. We deduce that, for each $i$, $e_i$ is the degree of a nontrivial proper irreducible projective representation of $M_{22}$. As $\phi_i(1)=e_i\theta(1)=e_i$, each $e_i$ divides $2^5$. It follows that $e_i\leq 2^5$ for each $i$ and $e_i$ is the degree of a nontrivial proper irreducible projective representation of $M_{22}$, but according to~\cite[pp. 39-41]{Atlas}, there is no such a projective degree.
\begin{enumerate}
  \item[(ii)] $U/M\cong U_3(5):2$ and $t\phi_i(1)$ divides $6$ or $8$, for all $i$.
\end{enumerate}
Let $M \leqslant W \leqslant U$ such that $W/M\cong U_3(5)$. Then $W\unlhd U$. Assume that $W \nleqslant I$. Since $t=|U:I|=|U:WI|\cdot|WI:I|$ and $|WI:I|=|W :W∩I|$, the index of some maximal subgroup of $W/M\cong U_3(5)$ divides $t$ and so divides $6$ or $8$, which is a contradiction by ~\cite[pp. 34-35]{Atlas}. Thus $W \leqslant I \leqslant U$. Let $M \leqslant V \leqslant W$ such that $V/M \cong M_{10}$. We have that $\theta$ is $V$-invariant and, since the Schur multiplier of $V/M$ is trivial, $\theta$ extends to $\theta_0 \in \Irr(V)$. By Lemma~\ref{lem:gal}(b), $\tau \theta_0$ is an irreducible
constituent of $\theta^V$ for every $\tau\in \Irr(V/M)$. Choose $\tau\in \Irr(V/M)$ with $\tau(1)=16$ and let $\gamma = \tau \theta_0\in Irr(V|\theta)$. If $\chi \in \Irr(I)$ is an irreducible constituent of $\gamma^I$, then $\chi(1) \geqslant \gamma(1)$ by Frobenius reciprocity ~\cite[Lemma 5.2]{Isaacs-book} and also $\chi(1)$ divides $6$ or $8$,  which implies that $16=\gamma(1)\leqslant \chi(1) \leqslant 8$, which is contradiction. \smallskip

\noindent \textbf{(3)} $H_{0}=McL$. Then by Lemma~\ref{lem:spor}(b), $U/M\cong U_4(3)$ and $t\phi_i(1)$ divides $35$ or $60$, for all $i$. By inspecting the list of maximal subgroups of $U/M \cong U_4(3)$ in ~\cite[pp. 52-59]{Atlas}, no index of maximal subgroup of $U_4(3)$ divides $35$ or $60$, and so $t=1$. Thus $I/M= U/M\cong U_4(3)$ and $\phi_i(1)$ divides $35$ or $60$, for all $i$. Assume first $e_j=1$, for some $j$. Then $\theta$ extends to $\varphi_{j}\in \Irr(I)$. It follows from Lemma~\ref{lem:gal}(b) that  $\tau \varphi_{j}$ is an irreducible constituent of $\theta^I$ for every $\tau\in \Irr(I/M)$, and so $\tau(1) \varphi_{j}(1)=\tau(1)$ divides $35$ or $60$. Now let $\tau\in \Irr(I/M)=\Irr(U_{4}(3))$ with $\tau(1)=21$ and this degree does not divide neither $35$, nor $60$, which is a contradiction. Therefore $e_i > 1$, for all $i$. Therefore, for each $i$, $e_i$ is the degree of a nontrivial proper irreducible projective representation of $U_{4}(3)$. As $\phi_i(1)=e_i\theta(1)=e_i$, each $e_i$ divides $35$ or $60$, it follows from \cite[pp. 53-59]{Atlas} that $e_{i}\in\{6,15,20,35\}$. Let now $M \leqslant V \leqslant U$ such that $V/M \cong U_3(3)$. We have that $\theta$ is $V$-invariant, and since the Schur multiplier of $V/M$ is trivial, $\theta$ extends to $\theta_0 \in \Irr(V)$. It follows from  Lemma~\ref{lem:gal}(b) that $\tau \theta_0$ is an irreducible constituent of $\theta^V$ for every $\tau\in \Irr(V/M)$. Take $\tau\in \Irr(V/M)$ with $\tau(1)=32$, and let $\gamma = \tau \theta_0\in Irr(V|\theta)$. If $\chi \in \Irr(I)$ is an irreducible constituent of $\gamma^I$, then $\chi(1) \geq \gamma(1)=32$ by Frobenius reciprocity ~\cite[Lemma 5.2]{Isaacs-book}. This shows that $e_{i}=35$, for all $i$, that is to say, $\varphi_{i}(1)/\theta(1)$ divides $35$, for all $i$, and so Lemma~\ref{lem:sol} implies that $I/M\cong U_{4}(3)$ is solvable, which is a contradiction. \smallskip

\noindent \textbf{(4)} $H_{0}=He$. Then by Lemma~\ref{lem:spor}(b), $U/M\cong S_4(4):2$ and $t=1$, or equivalently, $I/M=U/M\cong S_4(4):2$. Moreover, $\phi_i(1)=1$, for all $i$. Then $\theta$ extends to $\phi_i \in \Irr(I)$, and so by Lemma~\ref{lem:clif}(b), $2058\tau (1)$ divides some degrees of $G$, for $\tau (1)=510$, which is a contradiction. Therefore $I_{G'}(\theta)=G'$.\smallskip

\noindent \textbf{(5)} $H_{0}=Suz$. Then by Lemma~\ref{lem:spor}(b), one of the following holds:
\begin{enumerate}
  \item[(i)] $U/M\cong G_2(4)$ and $t\phi_i(1)$ divides $3^2\cdot13$ or $3\cdot5\cdot7$, for all $i$.
\end{enumerate}
By inspecting the list of maximal subgroups of $G_2(4)$ in \cite[pp. 97-99]{Atlas}, no index of a maximal subgroup of $G_2(4)$ divides $3^2\cdot13$ or $3\cdot5\cdot7$,  then $t = 1$, and so $I/M= U/M\cong G_2(4)$. Note that $\phi_i(1)/\theta(1)$ divides $3^2\cdot13$ or $3\cdot5\cdot7$, for all $i$. If $\phi_i(1)/\theta(1)>1$, for all $i$, then we apply Lemma~\ref{lem:sol}, and so we conclude that $I/M$ is solvable, which is a contradiction. Therefore, $\varphi_{i}(1)=\theta(1)=1$ in which case $\theta$ extends to $\varphi_{i}$, for some $i$. It follows from  Lemma~\ref{lem:gal}(b) that $\tau \varphi_{i}$ is an irreducible
constituent of $\theta^I$ for every $\tau\in \Irr(I/M)$, and then $\tau(1) \varphi_{i}(1)=\tau(1)$ divides $3^2\cdot13$ or $3\cdot5\cdot7$. We can choose $\tau\in \Irr(I/M)=\Irr(G_2(4))$ with $\tau(1)=65$ and this degree does not divide $3^2\cdot13$ or $3\cdot5\cdot7$, which is a contradiction.
\begin{enumerate}
  \item[(ii)] $U/M\cong U_5(2)$  and $t\phi_i(1)$ divides $5$, for all $i$.
\end{enumerate}
As $U/M \cong U_5(2)$ does not have any subgroup of index $5$, by  ~\cite[pp. 72-73]{Atlas},$t=1$ and so $I/M= U/M\cong U_5(2)$. Thus $\phi_i(1)/\theta(1)$ divides $5$, for all $i$. Since $U_5(2)$ has trivial Schur multiplier, it follows that $\theta$ extends to $\theta _0\in \Irr(I)$, and so by Lemma \ref{lem:clif}(b) $(\theta_0\tau)^{G'}\in \Irr(G')$, for all $\tau \in \Irr(I/M)$. For $\tau(1)=300\in \cd(U_5(2))$, it turns out that $5\cdot300\cdot\theta_0(1)=2^2\cdot3\cdot5^3$ divides some degrees of $G$, which is a contradiction.\smallskip

\noindent \textbf{(6)} $H_{0}=Fi_{22}$. Then by Lemma~\ref{lem:spor}(b),  one of the following holds:
\begin{enumerate}
  \item[(i)] $U/M \cong2\cdot U_6(2)$ and $t\varphi_i(1)$ divides one of $3\cdot5\cdot11$, $2^2\cdot3\cdot5\cdot11$, $2^4\cdot5\cdot7$ or $3^5$, for all $i$.
\end{enumerate}
\noindent As $U/M$ is perfect, the center of $U/M$ lies in every maximal subgroup of $U/M$ and so the indices of maximal subgroups of $U/M$ and those of $U_6(2)$ are the same. By inspecting the list of maximal subgroups of $U_6(2)$ in ~\cite[pp. 115-121]{Atlas}, the index of a maximal subgroup of $U_6(2)$ no divides  $3\cdot5\cdot11$, $2^2\cdot3\cdot5\cdot11$, $2^4\cdot5\cdot7$ or $3^5$. Thus $t=1$ and hence $I=U$. Let $M \leq L \leq I$ such that $L/M$ is isomorphic to the center of $I/M$ and let $\lambda \in \Irr(L|\theta)$. As $L \unlhd I$, for any $\varphi\in \Irr(I|\lambda)$ we have that $\varphi(1)$ divides $3\cdot5\cdot11$, $2^2\cdot3\cdot5\cdot11$, $2^4\cdot5\cdot7$ or $3^5$. As above, we deduce that $\lambda$ is $I$-invariant.
Let $L\leq T \leq I$ such that $T/L \cong U_5(2)$. It follows that $\lambda$ is $T$-invariant and since the Schur multiplier of $T/L\cong U_5(2)$ is trivial, we have that $\lambda$ extends
to $\lambda_0 \in \Irr(T)$. By ~\ref{lem:gal}(b), $\tau\lambda_0$ is an irreducible constituent of $\lambda^T$ for every $\tau\in \Irr(T/L)$. Choose $\tau\in\Irr(T/L)$ with $\tau(1)=2^{10}$ and let $\gamma=\tau\lambda_0\in \Irr(T|\lambda)$. If $\chi\in\Irr(I)$ is any irreducible constituent of $\gamma^I$, then $\chi(1)\geq\gamma(1)$ by Frobenius reciprocity ~\cite[Lemma 5.2]{Isaacs-book} and $\chi(1)$ divides $3\cdot5\cdot11$, $2^2\cdot3\cdot5\cdot11$, $2^4\cdot5\cdot7$ or $3^5$, which implies that $\gamma(1)2^{10}\lambda(1)\leq \chi(1)\leq660$, which is impossible.
\begin{enumerate}
  \item[(ii)] $U/M \cong O^+_8(2):S_3$ and $t \varphi_i(1)$ divides $6$, for all $i$.
\end{enumerate}
Let $M \unlhd W \unlhd U$ such that $W/M \cong O^+_8(2)$. We have that $M \unlhd I∩W \unlhd I$ and $M \unlhd I∩W \leqslant W$. Assume $W \nless I$. Then $I \lneq WI \leqslant U$ and $t=|U:I|=|U : WI|\cdot|WI : I|$. Now $|WI:I|=|W:W ∩ I|>1$, and hence $t$ is divisible by $|W:W \cap I|$. As $W/M \cong O^+_8(2)$, $t$ is divisible by the index of some maximal subgroup of $O^+_8(2)$. Thus some index of a maximal subgroup of $O^+_8(2)$ divides $6$, which is impossible by ~\cite[pp. 85-88]{Atlas}. Thus $W \leq I \leq U$. Write $\theta ^W =\sum_{i=1}^{l} f_i \mu_i$ where $\mu_i \in \Irr(W|\theta)$ for $ i=1,2,...,l$. As $W \unlhd I$, $\mu_i(1)$ divides $6$ for every $i$. If $f_j=1$ for some $j$,
then $\theta$ extends to $\theta_0\in\Irr(W)$. By ~\ref{lem:gal}(b), $\tau \theta_0$ is an irreducible constituent of $\theta^W$ for every $\tau \in \Irr(W/M)$, and so $\tau(1)\theta_0(1)=\tau(1)$ divides
$6$. However we can choose $\tau \in \Irr(W/M)$ with $\tau(1)=28$ and this degree does not divide $6$. Therefore $f_i>1$, for all $i$. We deduce that, for each $i$, $f_i$ is the degree of a nontrivial proper irreducible projective representation of $O^+_8(2)$. As $\mu_i(1)=f_i \theta(1)=f_i$, each $f_i$ divides $6$. This is impossible as the
smallest nontrivial proper projective degree of $O^+_8(2)$ is $8$.

\begin{enumerate}
  \item[(iii)]$U/M \cong 2^{10}:M_{22}$ and $t\varphi_i(1)$ divides $6$, for all $i$.
\end{enumerate}
Let $M \unlhd L \unlhd U$ such that $L/M \cong 2^{10}$. We have that $L\unlhd U$ and $U/L \cong M_{22}$. The same argument as in part (ii) shows that $U=IL$ since the minimal index of a maximal subgroup of $M_{22}$ is $22$ by ~\cite[pp. 39-41]{Atlas}. Hence $U/L \cong I/L_1\cong M_{22}$, where $L_1=L \cap I \unlhd I$. Let $\lambda \in \Irr(L_1|\theta)$. Then for any $\varphi \in \Irr(I|\lambda)$, we have that $\varphi(1)$ divides $6$. We conclude that $\lambda$ is $I$-invariant as the index of a maximal subgroup of $I/L_1\cong M_{22}$ is at least $22$. Write $\lambda^I=\sum_{i=1}^{l} f_i\mu_i$, where $\mu_i \in \Irr(I|\lambda)$ for $ i=1,2,...,l$. Then $\mu_i(1)$ divides $6$, for each $i$. If $f_j=1$ for some $j$, then $\lambda$ extends to $\lambda_0 \in \Irr(I)$. By ~\ref{lem:gal}(b), $\tau \lambda_0$ is an irreducible constituent of $\lambda^I$ for every $\tau \in \Irr(I/L_1)$, and so $\tau(1)\lambda_0(1)=\tau(1)$ divides $6$. However we can choose $\tau \in \Irr(I/L_1)$ with $\tau(1)=21$ and this degree does not divide $6$. Therefore $f_i>1$, for all $i$. We deduce that, for each $i$, $f_i$ is the degree of a nontrivial proper irreducible projective representation of $M_{22}$. As $\mu_i(1)=f_i \lambda(1)=f_i$, each $f_i$ divides $6$. However this is impossible as the smallest nontrivial proper projective degree of $M_{22}$ is $10$.\smallskip

\noindent \textbf{(7)} $H_{0}=Fi_{24}'$. Then by Lemma~\ref{lem:spor}(b), $U/M\cong 2.Fi_{23}$, and, for each $i$, $t\varphi_i(1)$ divides one of the numbers in $\mathcal{A}$
\begin{align*}
  \mathcal{A}:=\{&2^4\cdot5^2\cdot7\cdot17\cdot23, 2\cdot3^3\cdot7\cdot11\cdot13\cdot17, 2^2\cdot3\cdot11\cdot13\cdot17\cdot23,\\
  &2^3\cdot3\cdot7\cdot11\cdot13\cdot23,
   2^4\cdot3\cdot13\cdot17\cdot23, 2^2\cdot7\cdot11\cdot17\cdot23, 11\cdot13\cdot17\cdot23\}.
\end{align*}
By inspecting the list of maximal subgroups of $Fi_{23}$ in ~\cite[pp. 177-180]{Atlas}, the index of a maximal subgroup of $U/M$ divides no number in $\mathcal{A}$, then $t=1$, and so $I=U$. As the Schur multiplier of $I/M\cong Fi_{23}$ is trivial and $\theta$ is $I$-invariant, we deduce from~\cite[Theorem 11.7]{Isaacs-book}, that $\theta$ extends to $\theta_0\in \Irr(I)$. By ~\ref{lem:gal}(b), $\tau\theta_0$ is an irreducible constituent of $\theta^I$ for every $\tau\in \Irr(I/M)$, and so $\tau(1)\theta_0(1)=\tau(1)$ divides one of the numbers in $\mathcal{A}$. Choose $\tau\in \Irr(I/M)=\Irr(Fi_{23})$ with $\tau(1)=559458900$. This degree divides none of the numbers in $\mathcal{A}$, which is a contradiction.
\end{proof}

\begin{proposition}\label{prop:spor-4}
Let $G$ be a finite group with $\cd(G)=\cd(H)$ where $H$ is an almost simple group whose socle is a sporadic simple group $H_{0}$. If $G'/M$ is the chief factor of $G$, then $M=1$, and hence $G'\cong H_{0}$.
\end{proposition}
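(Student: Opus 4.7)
The plan is to reduce to $M=1$ in two stages, mirroring the strategy that succeeds in the simple-socle case but now applied to $G'$ and its normal subgroup $M$.

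\emph{First stage ($M=M'$).} By Proposition~\ref{prop:spor-3}, every linear $\theta\in\Irr(M)$ is $G'$-invariant. Combined with $G'=G''$ (established inside the proof of Proposition~\ref{prop:spor-1-2}), Lemma~\ref{lem:schur} yields $M'=[M,G']$ and $|M/M'|$ divides $|\M(H_0)|$. For $H_0\in\{He,HN\}$, Table~\ref{tbl:spor} shows $|\M(H_0)|=1$, so $M=M'$ immediately. For the remaining eight cases I would assume, for contradiction, that $M\neq M'$. Lemma~\ref{lem:schur} then places $M/M'$ inside $Z(G'/M')$, so $G'/M'$ is a perfect central extension of $H_0$ by a non-trivial subgroup of $\M(H_0)$, hence a quotient of the universal cover $n.H_0$ for some $n>1$ dividing $|\M(H_0)|$. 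Every faithful irreducible character of this extension inflates to an irreducible character of $G'$, and its degree must divide some element of $\cd(G)=\cd(H)$ by Clifford theory. The plan is to exhibit, for each $H_0\in\{J_2,HS,J_3,McL,O'N,Suz,Fi_{22},Fi'_{24}\}$, at least one faithful character degree of some $n.H_0$ (read from the \ATLAS\cite{Atlas}) that divides no element of $\cd(\Aut(H_0))$, producing the desired contradiction and forcing $M=M'$.

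\emph{Second stage ($M=1$).} Suppose $M=M'\neq 1$, and pick $N\unlhd G$ maximal with $N<M$, so that $M/N$ is a chief factor of $G$. Since $M$ is perfect, $M/N$ cannot be abelian, hence $M/N\cong S^k$ for some non-abelian simple group $S$ and integer $k\geq 1$. The constraint $\cd(S)\subseteq\{d:d\mid c\text{ for some }c\in\cd(H)\}$ already restricts $S$ severely; when $S$ is a sporadic group or the Tits group, Proposition~\ref{prop:spor-list} enumerates the candidates, and when $S$ is alternating or of Lie type, the standard arguments from the proof of Proposition~\ref{prop:spor-1-2} (using consecutive and proper power degrees via Lemma~\ref{lem:spor}(b)) will rule out most of them. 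By Lemma~\ref{lem:exten} pick a nontrivial $\theta\in\Irr(S)$ extending to $\Aut(S)$, and by Lemma~\ref{lem:exten-2} extend $\theta^k\in\Irr(M/N)$ to some $\widehat\chi\in\Irr(G/N)$. Gallagher's Theorem (Lemma~\ref{lem:gal}(b)) then gives $\widehat\chi\cdot\psi\in\Irr(G/N)$ for every $\psi\in\Irr(G/M)$, so $\theta(1)^k\psi(1)\in\cd(G)=\cd(H)$. Since $G'/M\cong H_0$ is normal in $G/M$, the group $G/M$ inherits a rich family of irreducible characters from $H_0$ and from $H$; choosing $\psi$ whose degree is, for instance, a large degree of $H_0$ that extends to $\Aut(H_0)$, I would arrange that $\theta(1)^k\psi(1)$ is not a character degree of $H$, contradicting $\cd(G)=\cd(H)$.

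The main obstacle is the case analysis in the first stage: for each $H_0$ with non-trivial Schur multiplier I must locate a faithful character degree of some $n.H_0$ (with $1<n\mid |\M(H_0)|$) that fails to divide any element of $\cd(\Aut(H_0))$. This is a finite but delicate computation, particularly for $Suz$ and $Fi_{22}$, where $|\M(H_0)|=6$ produces the three covering groups $2.H_0$, $3.H_0$, and $6.H_0$, each with its own large set of faithful degrees. A secondary difficulty in the second stage is pinning down the possible simple composition factor $S$ of $M/N$, since it is constrained only by divisibility of its character degrees in $\cd(H)$; here Proposition~\ref{prop:spor-list} together with Lemma~\ref{lem:spor}(b) should narrow the possibilities to a finite, tractable list on which the extension-plus-Gallagher argument can be carried out case by case.
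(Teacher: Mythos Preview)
Your proposal is correct and follows the paper's proof closely. The first stage is identical: apply Lemma~\ref{lem:schur} to bound $|M/M'|$ by $|\M(H_0)|$, then for each group with nontrivial multiplier exhibit a faithful degree of the relevant cover $n.H_0$ that divides no element of $\cd(H)$ (the paper tabulates one such degree per case in Table~\ref{tbl:spor-2}, so the ``delicate computation'' you anticipate for $Suz$ and $Fi_{22}$ reduces to reading off a single line).

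In the second stage your strategy is also the paper's, but you introduce two unnecessary complications. First, the paper takes $N\unlhd G'$ (not $N\unlhd G$) so that the Gallagher step produces $\varphi(1)^k\tau(1)\in\cd(G')$ with $\tau$ ranging over $\Irr(G'/M)=\Irr(H_0)$ directly; one then simply takes $\tau(1)$ equal to the largest degree of $H_0$ and observes that the resulting product divides no element of $\cd(H)$. Working in $G$ as you do forces you to reason about $\Irr(G/M)$, which is not explicitly known. Second, and more to the point, the ``secondary difficulty'' of pinning down $S$ is illusory: the argument never needs to identify $S$. Any nontrivial $\varphi\in\Irr(S)$ extending to $\Aut(S)$ satisfies $\varphi(1)\geq 2$, so $\varphi(1)^k\cdot(\text{largest degree of }H_0)$ already exceeds every degree of $H$ in the divisibility order, and that is all that is required. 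The case split over alternating, Lie-type, and sporadic $S$ that you foresee can be dropped entirely.
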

\begin{proof}
Here we deal with the groups mentioned in Remark~\ref{rem:main}, namely, $H=\Aut(H_{0})$, where $H_{0}$ is one of the sporadic groups $J_{2}$, $J_{3}$, $McL$, $HS$, $He$, $HN$, $Fi_{22}$, $Fi_{24}'$, $O'N$ and $Suz$. It follows from Proposition~\ref{prop:spor-1-2} that $G'/M$ is isomorphic to $H_{0}$, and by Proposition~\ref{prop:spor-3}, every linear character $\theta$ of $M$ is $G'$-invariant. We no apply Lemma~\ref{lem:schur}, and conclude that $|M/M'|$ divides the order of Schur Multiplier $\M(H_{0})$, see Table~\ref{tbl:spor}. Therefore, $G'/M'$ is isomorphic to either $H_{0}$, or one of the groups in the third column of Table~\ref{tbl:spor-2}. In the latter case, we observe by \ATLAS \cite{Atlas} that $G'/M'$ has a degree as in the fifth column of Table~\ref{tbl:spor-2} which must divide some degrees of $H_{0}$, which is a contradiction. Therefore, $|M/M'|=1$, or equivalently, $M$ is perfect.

Suppose that $M$ is non-abelian, and let $N\leq M$ be a normal subgroup of $G'$ such that $M/N$ is a chief factor of $G'$. Then $M/N\cong S^{k}$, for some non-abelian simple group $S$. It follows from Lemma~\ref{lem:exten} that $S$ possesses a nontrivial irreducible character $\varphi$ such that $\varphi^{k}\in \Irr(M/N)$ extends to $G'/N$. By Lemma~\ref{lem:gal}(b), we must have $\varphi(1)^{k}\tau(1)\in \cd(G'/N)\subseteq \cd(G')$, for all $\tau \in \Irr(G'/M)$. Now we can choose $\tau\in G'/M$ such that $\tau(1)$ is the largest degree of $H_{0}$ as in the last column of Table~\ref{tbl:spor-2}, and since $\varphi$ is nontrivial, $\varphi(1)^{k}\tau(1)$ divides no degrees of $G$, which is a contradiction. Therefore, $M$ is abelian, and since $M=M'$, we conclude that $M=1$. Consequently, $G'$ is isomorphic to $H_{0}$.
\end{proof}

\begin{table}[t]
  \caption{Degrees of some groups related to sporadic simple groups $H_{0}$ in Proposition~\ref{prop:spor-4}.\label{tbl:spor-2}}
  \begin{tabular}{llp{2cm}lll}
  \hline
  $H_{0}$ &
  $\Aut(H_{0})$ &
  $G'/M'$ &
  Degree of &
  Degree of  &
  Largest degree of \\
   &
   &
   &
  $\Aut(H_{0})$ &
  $G'/M'$ &
  $H_{0}$\\
  \midrule
    $J_2$ &
    $J_2:2$ &
    $2.J_{2}$ &
    $28$ &
    $64$ &
    $336$ \\
    $HS$ &
    $HS:2$ &
    $2.HS$ &
    $308$ &
    $616$ &
    $3200$ \\
    $J_3$ &
    $J_3:2$ &
    $3.J_{3}$ &
    $170$ &
    $1530$ &
    $3078$ \\
    $McL$ &
    $McL:2$&
    $3.McL$ &
    $1540$ &
    $1980$ &
    $10395$ \\
    $He$ &
    $He:2$ &
    - &
    $102$ &
    - &
    $23324$\\
    $Suz$ &
    $Suz:2$ &
    $2.Suz$, $3.Suz$, $6.Suz$&
    $10010$ &
    $60060$ &
    $248832$ \\
    $O'N$ &
    $O'N:2$ &
    $3.O'N$ &
    $51832$ &
    $63612$ &
    $234080$\\
    $Fi_{22}$ &
    $Fi_{22}:2$ &
    $2.Fi_{22}$, $3.Fi_{22}$, $6.Fi_{22}$ &
    $277200$ &
    $235872$ &
    $2729376$ \\
    $HN$ &
    $HN:2$  &
    - &
    $266$ &
    - &
    $5878125$ \\
    $Fi'_{24}$ &
    $Fi'_{24}:2$  &
    $3.Fi_{24}'$ &
    $149674800$ &
    $216154575$ &
    $336033532800$ \\
  \bottomrule
  \multicolumn{6}{l}{The symbol `-' means that there is only one possibility for  $G'/M'$ which is $H_{0}$.}\\
  \end{tabular}
\end{table}

\begin{proposition}\label{prop:spor-5}
Let $G$ be a finite group with $\cd(G)=\cd(H)$ where $H$ is an almost simple group whose socle is a sporadic simple group $H_{0}$. Then $G/Z(G)$ is isomorphic to $H$.
\end{proposition}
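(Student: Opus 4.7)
The plan is as follows. By Remark~\ref{rem:main} it suffices to treat $H=\Aut(H_{0})=H_{0}{:}2$ for one of the ten listed sporadic socles; by Proposition~\ref{prop:spor-4}, $G'\cong H_{0}$ and $M=1$. Setting $C:=C_{G}(G')$, the triviality of $Z(H_{0})$ gives $G'\cap C=1$, so $G'C=G'\times C$, and conjugation embeds $G/C$ into $\Aut(G')=H$ with image containing $\operatorname{Inn}(G')\cong G'$. Since $|H:H_{0}|=2$, exactly one of the following holds: $G=G'\times C$, or $G/C\cong H$.

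The first step is to rule out $G=G'\times C$. In that case $\cd(G)=\{\theta(1)\psi(1):\theta\in\Irr(G'),\,\psi\in\Irr(C)\}$, and every element of $\cd(G)=\cd(H)$ is bounded by $2\max\cd(H_{0})$. Choosing $\theta$ of maximal degree forces $\psi(1)\leq 2$ for every $\psi\in\Irr(C)$, and $\psi(1)=2$ is excluded because $2\notin\cd(H)$ for each listed $H$ (the minimal nontrivial degree of $H$ is much larger than $2$, by \cite{Atlas}). Thus $C$ would be abelian and $\cd(G)=\cd(H_{0})$; but the outer involution of $H_{0}$ fuses some pair of irreducible characters, producing a degree in $\cd(H_{0})\setminus\cd(H)$, which contradicts $\cd(G)=\cd(H)$. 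Hence $G/C\cong H$.

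The remaining task, which I expect to be the main obstacle, is to show $C=Z(G)$. The inclusion $Z(G)\leq C$ is immediate. The same degree bound applied to $\theta\otimes\psi\in\Irr(G'\times C)$ forces $C$ to be abelian (otherwise the product with a top-degree $\theta$ would exceed $\max\cd(H)$, and the possibility $\psi(1)=2$ is again ruled out). Fix $x\in G$ representing the nontrivial coset of $G'C$, and let $\sigma\in\Aut(C)$ denote conjugation by $x$. With $C$ abelian, $\Irr(G'C)=\{\theta\otimes\lambda:\theta\in\Irr(G'),\,\lambda\in\widehat{C}\}$, and Clifford theory gives
\[
\cd(G)=\{\theta(1):\theta^{x}=\theta\text{ and }\lambda^{\sigma}=\lambda\}\,\cup\,\{2\theta(1):\text{the $\langle x\rangle$-orbit of }(\theta,\lambda)\text{ has size }2\}.
\]
If $\sigma$ acted non-trivially on $\widehat{C}$, picking $\theta=1_{G'}$ and $\lambda$ with $\lambda^{\sigma}\neq\lambda$ would place $2$ into $\cd(G)=\cd(H)$, a contradiction. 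Hence $\sigma$ is trivial on $\widehat{C}$, and therefore on the abelian group $C$, so $x$ centralises $C$; combined with the fact that $G'C$ already centralises $C$, this yields $C\leq Z(G)$. Therefore $Z(G)=C$ and $G/Z(G)\cong H$.

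The argument ultimately rests on two easily verified facts about each $H_{0}$ in Remark~\ref{rem:main}, namely $2\notin\cd(H)$ and $\cd(H_{0})\neq\cd(H)$; both are direct from \cite{Atlas}.
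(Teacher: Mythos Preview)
Your proof is correct, but it diverges from the paper's in one notable way: to show $C=Z(G)$ you work with character degrees, whereas the paper uses a one-line group-theoretic observation.

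Once $G/C\cong H$ is established, the paper simply notes that $C=C_{G}(G')$ is normal in $G$, so $[G,C]\leq C$, while trivially $[G,C]\leq G'$; hence $[G,C]\leq G'\cap C=1$ and $C\leq Z(G)$ immediately. No separate verification that $C$ is abelian, no Clifford analysis of the $x$-action on $\widehat{C}$, and no appeal to $2\notin\cd(H)$ are needed. Your route---first bounding $\psi(1)\leq 2$ to force $C$ abelian, then arguing that a non-trivial $\sigma$ would put $2$ into $\cd(G)$---is sound but considerably longer than necessary.

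For the elimination of $G=G'\times C$ the paper is also more direct: it exhibits (Table~\ref{tbl:spor-2}, fourth column) a specific degree of $H$ that is absent from $\cd(H_{0})$. Your abstract version is fine in spirit, though the clause ``producing a degree in $\cd(H_{0})\setminus\cd(H)$'' is not quite right as stated: fusion of a pair of degree $d$ yields $2d\in\cd(H)$, and whether $d$ itself survives in $\cd(H)$ depends on whether some \emph{other} character of degree $d$ extends. What you actually need, and what you correctly isolate at the end, is just the inequality $\cd(H_{0})\neq\cd(H)$, which is indeed an \textsf{ATLAS} check.
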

\begin{proof}
By Remark~\ref{rem:main}, we will consider the case where $H=\Aut(H_{0})$ with $H_{0}$  one of the sporadic groups $J_{2}$, $J_{3}$, $McL$, $HS$, $He$, $HN$, $Fi_{22}$, $Fi_{24}'$, $O'N$ and $Suz$. According to  Proposition~\ref{prop:spor-4}, $G'$ is isomorphic to $H_{0}$. Let $A:= C_G(G')$. Since $G'\cap A=1$ and $G'A\cong G' \times A$, it follows that $G'\cong G'A/A\unlhd G/A\leq \Aut(G')$. Thus $G/A$ is isomorphic to $H_{0}$ or $\Aut(H_{0})=H_{0}:2$. In the case where $G/A$ is isomorphic to $H_{0}$, we must have $G\cong A\times H_{0}$. This is impossible as $G$ possesses a character of degree as in the fourth column of Table~\ref{tbl:spor-2}, however,  $H_{0}$ has no such degree. Therefore, $G/A$ is isomorphic to $\Aut(H_{0})$. Note also that $G'\cap A=1$. Then $[G,A]=1$, and hence $A=Z(G)$, as claimed.
\end{proof}





\end{document}